  \setlist[enumerate]{font=\normalfont}
\newcommand{\N}{\mathbb{N}}
\newcommand{\Z}{\mathbb{Z}}
\newcommand{\T}{\mathcal{T}}
\newcommand{\I}{\mathcal{I}}
\newcommand{\Iso}{\textup{Iso}}                                     
\newcommand{\TT}{\mathbb{T}}
\DeclareMathOperator{\orb}{orb}
\DeclareMathOperator{\supp}{supp}
\let\tilde\widetilde{}
\let\leq\leqslant{}
{}
\let\subset\subseteq{}
{}
\newcommand{\eff}{\operatorname{eff}}
\newcommand{\ob}{\operatorname{ob}}
\newtheorem{lemma}{Lemma}
\newtheorem{corollary}[lemma]{Corollary}
\newtheorem{theorem}[lemma]{Theorem}
\newtheorem{proposition}[lemma]{Proposition}
\theoremstyle{definition}
\newtheorem{definition}[lemma]{Definition}
\newtheorem{example}[lemma]{Example}
\newtheorem{examples}[lemma]{Examples}
\newtheorem{remark}[lemma]{Remark}
\newtheorem*{theorem*}{Theorem}
\numberwithin{lemma}{section}
\numberwithin{equation}{section}                                    
\title[Ideals in \'etale groupoid $C^*$-algebras]{Some results regarding the ideal structure of $C^*$-algebras of \'etale groupoids}
\author[K.A.~Brix]{Kevin Aguyar Brix}
\address[K.A.~Brix]{School of Mathematics and Statistics \\ University of Glasgow \\ University Place, G12 8QQ, United Kingdom}
\email[K.A.~Brix]{kabrix.math@fastmail.com}
\author[T.M.~Carlsen]{Toke Meier Carlsen}
\address[T.M.~Carlsen]{K\o{}ge, Denmark}
\email[T.M.~Carlsen]{toke.carlsen@gmail.com}
\author[A.~Sims]{Aidan Sims}
\address[A.~Sims]{School of Mathematics and Applied Statistics\\ University of Wollongong\\ Wollongong Australia 2522}
\email[A.~Sims]{asims@uow.edu.au}
\subjclass[2020]{46L05 (primary), 37A55, 46L55 (secondary).}
\keywords{Groupoid, \'etale groupoid, inner-exact, ideal, $C^*$-algebra}
\thanks{This research was supported by Australian Research Council grant DP200100155.
KAB acknowledges the support of the Carlsberg Foundation via an Internationalisation Fellowship and the support of the Independent Research Fund Denmark (Case number 1025-00004B)}
\begin{document}
\begin{abstract}
We prove a sandwiching lemma for inner-exact locally compact Hausdorff
\'etale groupoids. Our lemma says that every ideal of the reduced
$C^*$-algebra of such a groupoid is sandwiched between the ideals
associated to two uniquely defined open invariant subsets of the unit
space. We obtain a bijection between ideals of the reduced $C^*$-algebra,
and triples consisting of two nested open invariant sets and an ideal in
the $C^*$-algebra of the subquotient they determine that has trivial
intersection with the diagonal subalgebra and full support. We then
introduce a generalisation to groupoids of Ara and Lolk's relative strong
topological freeness condition for partial actions, and prove that the
reduced $C^*$-algebras of inner-exact locally compact Hausdorff \'etale
groupoids satisfying this condition admit an obstruction ideal in Ara and
Lolk's sense.
\end{abstract}
\maketitle

\section{Introduction}

The purpose of this paper is to investigate the ideal structure of the
reduced $C^*$-algebras of locally compact Hausdorff \'etale groupoids. This
very broad class of $C^*$-algebras contains all reduced crossed products of
commutative $C^*$-algebras by discrete groups. It also includes graph
$C^*$-algebras \cite{KPRR}, higher-rank graph $C^*$-algebras
\cite{KumjianPask2000}, the models described by Spielberg \cite{Spielberg}
and Katsura \cite{KatsuraModels} for Kirchberg algebras, the stable and
unstable Ruelle algebras of Smale spaces (up to Morita equivalence), and many
self-similar action $C^*$-algebras \cite{ExelPardo}.

Among the more natural invariants of a $C^*$-algebra, but also among the most
difficult to compute, is its lattice of ideals. In the situation of \'etale
groupoid $C^*$-algebras, definitive theorems are available for $C^*$-algebras
of amenable groupoids that are essentially principal in the sense of Renault
\cite{Renault1991}, graph $C^*$-algebras \cite{anHuef-Raeburn1997,
Hong-Szymanski}, and for $C^*$-algebras of a single local homeomorphisms
\cite{KatsuraIdeals}, but few other truly general results about ideal
structure of groupoid $C^*$-algebras are available.

The analysis of ideals in \'etale groupoid $C^*$-algebras typically has two
components. The first is concerned with what we call here dynamical ideals,
and is well understood. The continuous functions on the unit space $G^{(0)}$
of an \'etale groupoid $G$ embed as a $\sigma$-unital subalgebra  $D$ of the
groupoid $C^*$-algebra. So each ideal $I$ of $C^*(G)$ yields an ideal $I \cap
D$ of $D$ and hence an open subset $U_I$ of $G^{(0)}$ on which it is
supported. This $U_I$ is invariant in the sense that if $s(\gamma) \in U_I$
then $r(\gamma) \in U_I$. If $I$ is generated as an ideal by $I \cap D$, we
call it a \emph{dynamical ideal}. The assignment $I \mapsto U_I$ is a lattice
isomorphism between dynamical ideals of $C^*(G)$ and open invariant sets of
$G^{(0)}$, giving a complete description of the dynamical ideals. In
particular, by identifying the \emph{essentially principle} (now sometimes
referred to instead as \emph{strongly effective}) and amenable groupoids for
which every ideal of $C^*_r(G)$ is dynamical, Renault gives a complete
description of the ideal structure for this class of groupoid $C^*$-algebras
\cite{Renault1991}.

The second component of the analysis is more complicated. It amounts to
understanding the collection of all possible ideals that have fixed
intersection with $D$. For full $C^*$-algebras, this is, in general,
hopelessly intractable: there is a zoo of ideals contained in the kernel of
the regular representation, which has trivial intersection with $D$, alone.
So we are led to restrict our attention to reduced $C^*$-algebras. Another
problem arises almost immediately: given an open invariant set $U$, the
restriction map $f \mapsto f|_{G \setminus G|_U}$ on $C_c(G)$ extends to a
homomorphism $C^*_r(G) \to C^*_r(G \setminus G|_U)$ whose kernel contains the
dynamical ideal $I_U$ associated to $U$. In the setting of full
$C^*$-algebras, this containment is an equality, but for reduced
$C^*$-algebras it need not be: as Willet's example \cite{Willett} shows, the
quotient $C^*_r(G)/I_U$ can coincide with the full $C^*$-algebra $C^*(G
\setminus G|_U)$, and we encounter the same zoo of ideals as before. So we
restrict attention further to groupoids that are inner-exact in the sense
that $C^*_r(G)/I_U \cong C^*_r(G \setminus G|_U)$ for every open invariant set
$U$. Lest this seem overly restrictive, note that this includes all amenable
\'etale groupoids $G$, and therefore all nuclear \'etale groupoid
$C^*$-algebras \cite{A-DR}.

In this setting, existing results rely, explicitly or otherwise, on a kind of
sandwiching lemma. This technique was developed by an Huef and Raeburn
\cite{anHuef-Raeburn1997} to analyse Cuntz--Krieger algebras. Here the
dynamical ideals are better known as \emph{gauge-invariant ideals} (see
\cref{prop:gauge-inv ideals}). To understand the ideals of a Cuntz--Krieger
algebra, an Huef and Raeburn concentrate on primitive ideals and demonstrate
that for each primitive ideal $I$ there are a unique smallest gauge-invariant
ideal $K$ containing $I$ and largest gauge-invariant ideal $J$ contained in
$I$. They then analyse the quotient $K/J$, which is itself Morita equivalent
to a graph algebra---but of a graph consisting of just one vertex and one
edge. The $C^*$-algebra of this graph is $C(\TT)$, so its ideal structure is
well understood, and their analysis proceeds from there. A similar idea was
used in \cite{Hong-Szymanski}, and again in \cite{KatsuraIdeals} to
understand ideal structure first for graph $C^*$-algebras and then for
topological-graph algebras, viewed as $C^*$-algebras associated to singly
generated irreversible dynamics.

Another instance of the same idea appears in Ara and Lolk's very interesting
work on partial actions \cite{Ara-Lolk2018}. They identify a relative strong
topological freeness condition that generalises Renault's topologically
principle condition in the setting of transformation groupoids for partial
actions. They show that relative strong topological freeness guarantees the
existence of an \emph{obstruction ideal}: a smallest dynamical ideal of
$C^*_r(G)$ that contains every ideal with trivial intersection with $D$. This
can again be regarded as a kind of sandwiching result, but with the
quantifiers switched: there exists a pair of dynamical ideals, namely the
zero ideal and the obstruction ideal, that sandwich every ideal that has
trivial intersection with $D$. One of our motivations in writing this paper
is that, because this particular aspect of Ara and Lolk's paper appears as a
technical step along the way to their main objective, it is in danger of
receiving less attention than we think it deserves, and we want to advertise
the idea more broadly.

In this paper, we take up the idea of the sandwiching lemma and of Ara and
Lolk's relative strong topological freeness condition and obstruction ideal.
We first establish a general sandwiching lemma for groupoid $C^*$-algebras
(\cref{lem:sandwich}): given any inner-exact locally compact Hausdorff
\'etale groupoid $G$, and any ideal $I$ of $C^*_r(G)$, there are a unique
smallest dynamical ideal $K$ containing $I$ and largest dynamical ideal
contained in $I$. As a result the ideals of $C^*_r(G)$ are parameterised by
triples $(U, V, J)$ consisting of open invariant sets $U \subseteq V
\subseteq G^{(0)}$, and an ideal $J$ of $C^*_r(G|_V \setminus G|_U)$ that has
trivial intersection with $D$ and vanishes nowhere on $G|_V \setminus G|_U$
(\cref{thm:all ideals}).

We then adapt Ara and Lolk's notions of topological freeness and strong
topological freeness at a point (see also Renault's notion of discretely
trivial isotropy \cite{Renault1991}), and of relative strong topological
freeness, from their setting of partial actions of groups to the setting of
\'etale groupoids. We identify a condition on \'etale groupoids, which we
phrase as being \emph{jointly effective where they are effective}, that
ensures that $C^*_r(G)$ admits an obstruction ideal in the sense of Ara and
Lolk (see \cref{thm:obstruction-ideal} and~\cref{cor:minimal}). We also show
that this obstruction ideal is minimal in the strong sense that there exists
an ideal that has trivial intersection with $D$ and whose support exhausts
the support of the obstruction ideal. We show that any groupoid whose
isotropy groups are all either trivial or infinite cyclic is jointly
effective where it is effective. This includes all graph groupoids and
groupoids arising from single local homeomorphisms. In our companion paper \cite{BCS23},
we show how to use our results to give a complete description of the ideal
structure of a large class of Deaconu--Renault groupoid $C^*$-algebras,
including those considered in \cite{anHuef-Raeburn1997, Hong-Szymanski,
KatsuraIdeals} and all $C^*$-algebras of rank-2 graphs.

\smallskip

The paper is arranged as follows. We introduce the background we need in
~\cref{sec:prelims}. In~\cref{sec:sandwich} we prove our
sandwiching lemma and explore its consequences. In
~\cref{sec:obstruction ideal} we introduce the notions of a groupoid
being effective at a unit, strongly effective at a unit, and being strongly
effective where it is effective. We then prove that such groupoids admit an
obstruction ideal, and discuss some examples. Finally in
~\cref{sec:examples}, we present examples of groupoids that are jointly
effective where they are effective, and describe the support of the
obstruction ideal; in particular, we devote~\cref{sec:AraLolk} to
showing exactly how our work in~\cref{sec:obstruction ideal}
generalises the ideas of Ara and Lolk.

\section{Preliminaries}\label{sec:prelims}

\subsection{Hausdorff \'etale groupoids}
We will always be working with topological groupoids that are locally
compact, Hausdorff, and \'etale, and we shall adopt most of the notation and
terminology from~\cite{SimsCRM} (see also~\cite{Renault}).

We consider the unit space $G^{(0)}$ as a locally compact Hausdorff subspace
of $G$, and we denote range and source maps by $r,s\colon G\to G^{(0)}$. A
\emph{bisection} is a subset $B$ of $G$ such that both $r$ and $s$ restrict
to injective maps on $B$. That $G$ is Hausdorff means that the unit space
$G^{(0)}$ is a closed subset of $G$, and that $G$ is \'etale (in the sense
that the range and source maps are local homeomorphisms) implies that
$G^{(0)}$ is also open, that $G$ has a basis consisting of open bisections,
and that the range and source fibres over a unit $x\in G^{(0)}$ given by $G^x
= \{\gamma\in G : r(\gamma) = x\}$ and $G_x = \{ \gamma\in G : s(\gamma) =
x\}$, respectively, are discrete in the relative topology. In particular, the
isotropy group over a unit $x\in G^{(0)}$ given as the intersection
\[
  \I(G)_x = G^x\cap G_x = \{ \gamma\in G : r(\gamma) = x = s(\gamma) \}
\]
is a discrete subgroup of $G$.
A unit $x$ is said to have trivial isotropy if $\I(G)_x = \{x\}$.
The isotropy subgroupoid is then the group bundle
\[
  \I(G) = \bigsqcup_{x\in G^{(0)}} \I(G)_x = \{\gamma\in G : r(\gamma) = s(\gamma)\}.
\]
We write $\I^\circ(G)$ for the topological interior of the isotropy of $G$. A
Hausdorff groupoid $G$ is said to be \emph{effective} if $\I^\circ(G) =
G^{(0)}$, that is, the interior of the isotropy subgroupoid with the subspace
topology coincides with the unit space. When $G$ is second-countable this
coincides (using a Baire category argument) with the notion of $G$ being
\emph{topologically principal} in the sense that $G^{(0)}$ has a dense set of
points with trivial isotropy.

\subsection{Reduced groupoid C*-algebra}
We will be working with the reduced groupoid $C^*$-algebras of locally
compact Hausdorff \'etale groupoids. We follow the exposition
of~\cite{SimsCRM}.

The convolution algebra $C_c(G)$ of a locally compact Hausdorff \'etale
groupoid $G$ is the set of compactly supported and complex-valued functions
on $G$ equipped with the convolution product
\[
  f\ast g (\gamma) = \sum_{\alpha \in G^{r(\gamma)}} f(\alpha) g(\alpha^{-1} \gamma)
\]
for all $f,g\in C_c(G)$ and $\gamma\in G$, and the involution $f^*(\gamma) =
\overline{f(\gamma^{-1})}$ for all $f\in C_c(G)$ and $\gamma\in G$. Each unit
$x\in G^{(0)}$ determines a regular representation $\pi_x \colon C_c(G) \to
B(\ell^2(G_x))$ given by
\[
  \pi_x(f) \delta_\gamma = \sum_{\alpha \in G_{r(\gamma)}} f(\alpha) \delta_{\alpha \gamma},
\]
for all $f\in C_c(G)$ and $\gamma\in G_x$. The \emph{reduced groupoid
$C^*$-algebra} $C^*_r(G)$ of $G$ is the completion of $\bigoplus_{x\in
G^{(0)}} \pi_x(C_c(G))$ in $\bigoplus_{x\in G^{(0)}} B(\ell^2(G_x))$. Since
the unit space $G^{(0)}$ is both open and closed in $G$, the commutative
algebra $C_0(G^{(0)})$ sits naturally as a subalgebra of $C^*_r(G)$, and we
refer to $C_0(G)$ as the diagonal subalgebra. Note that $C_0(G^{(0)})$ need
not be a $C^*$-diagonal (in the sense of Kumjian~\cite{Kumjian1986}) nor a
Cartan subalgebra (in the sense of Renault~\cite{Renault2008}).

Renault~\cite[Proposition~II.4.2]{Renault} shows (Renault makes the standing
assumption that the groupoids considered there are second-countable, but that
assumption is not needed for the following) that any element in the reduced
groupoid $C^*$-algebra may be thought of as a function on the groupoid. More
precisely, there exists a linear and norm-decreasing map $j\colon C^*_r(G)
\to C_0(G)$ given by
\[
  j(a)(\gamma) = \big( \pi_{s(\gamma)}(a) \delta_{s(\gamma)} \mid \delta_\gamma  \big)
\]
for all $a\in C^*_r(G)$ and $\gamma\in G$, and $j$ is the identity on
$C_c(G)$. The reduced groupoid $C^*$-algebra admits a faithful conditional
expectation $E \colon C^*_r(G) \to C_0(G^{(0)})$ onto the diagonal given by
restriction of functions in the sense that $j(E(a)) = j(a)|_{G^{(0)}}$ for
all $a \in C^*_r(G)$ \cite[Proposition 10.2.6]{SimsCRM}.
Renault shows that for $a,b \in C^*_r(G)$, the convolution formula for
$j(a)*j(b)$ is a convergent series that converges to $j(a*b)$.

A subset $U$ of $G^{(0)}$ is $G$-invariant (or simply invariant) if $r(GU)
\subset U$, and the reduction of $G$ to $U$ is $G|_U = \{ \gamma\in G :
r(\gamma), s(\gamma) \in U\}$. If $U$ is an open and invariant subset of
$G^{(0)}$, then $G|_U$ is an open subgroupoid of $G$ (and hence locally
compact, Hausdorff, and \'etale), and the inclusion $i_U \colon C_c(G|_U)\to
C_c(G)$ extends to an injective $^*$-homomorphism $i_U\colon C^*_r(G|_U) \to
C^*_r(G)$. We let $I_U \coloneqq i_U(C^*_r(G|_U))$ be the image of $i_U$ in
$C^*_r(G)$. This is an ideal with the property that $I_U \cap C_0(G^{(0)}) =
C_0(U)$ and $I_U$ is generated as an ideal by $C_0(U)$. We shall refer to
such ideals as \emph{dynamical ideals} (see~\cref{def:dynamical-ideal}).

The complement $G^{(0)} \setminus U$ is a closed invariant set of units, and
there is a $^*$-homomorphism $\pi_U \colon C^*_r(G) \to C^*_r(G|_{G^{(0)}
\setminus U})$ determined by $\pi_U(f) = f|_{G|_{G^{(0)}\setminus U}}$ for
all $f\in C_c(G)$.

Given an ideal $I$ in $C^*_r(G)$, we write $\supp(I) \coloneqq \{\gamma \in G
: j(I)(\gamma) \not= \{0\}\}$. So $\supp(I_U) = G|_U$ for every open
invariant $U \subseteq G^{(0)}$ (for completeness, we prove this in
Proposition~\ref{prop:dynamical ideals}.

\begin{lemma}\label{lem:support invariant}
Let $G$ be a locally compact Hausdorff \'etale groupoid. Suppose that $I$ is
an ideal of $C^*_r(G)$. Then $\supp(I)$ is invariant under multiplication and
inversion in $G$.
\end{lemma}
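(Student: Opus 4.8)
The plan is to establish the two closure properties separately. Throughout I will use that an ideal of a $C^*$-algebra is automatically self-adjoint, so $a \in I$ implies $a^* \in I$; that $C_c(G) \subseteq C^*_r(G)$, so $I$ is invariant under left and right convolution by functions in $C_c(G)$; and that, by the result of Renault quoted above, $j(a \ast b) = j(a) \ast j(b)$ with the convolution series convergent, for all $a, b \in C^*_r(G)$.

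For invariance under inversion, note that the identity $j(a^*)(\gamma) = \overline{j(a)(\gamma^{-1})}$ holds for $a \in C_c(G)$ directly from the definition of the involution, and hence for all $a \in C^*_r(G)$ because $j$ is norm-decreasing and $C_c(G)$ is dense in $C^*_r(G)$. So if $\gamma \in \supp(I)$, choose $a \in I$ with $j(a)(\gamma) \ne 0$; then $a^* \in I$ and $j(a^*)(\gamma^{-1}) = \overline{j(a)(\gamma)} \ne 0$, whence $\gamma^{-1} \in \supp(I)$.

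For invariance under multiplication, the subtlety is that convolving two arbitrary elements of $I$ and evaluating $j$ of the product at a composite element might yield zero because of cancellation in the convolution series. The remedy is to convolve instead with a function supported on a small open bisection, which collapses the relevant series to a single term. Precisely, fix $\gamma \in \supp(I)$ and $\eta \in G$ with $s(\eta) = r(\gamma)$; I claim $\eta\gamma \in \supp(I)$. Choose $a \in I$ with $j(a)(\gamma) \ne 0$. Since $G$ has a basis of open bisections, there is an open bisection $B$ with $\eta \in B$, and then a function $f \in C_c(G)$ with support contained in $B$ and $f(\eta) = 1$. Now $f \ast a \in I$ and
\[
  j(f \ast a)(\eta\gamma) = \sum_{\alpha \in G^{r(\eta\gamma)}} f(\alpha)\, j(a)(\alpha^{-1}\eta\gamma).
\]
Since $r(\eta\gamma) = r(\eta)$ and $r$ is injective on $B$, which contains the support of $f$, the only $\alpha$ contributing a nonzero term is $\alpha = \eta$; as $\eta^{-1}\eta\gamma = \gamma$, the sum equals $f(\eta)\,j(a)(\gamma) = j(a)(\gamma) \ne 0$, so $\eta\gamma \in \supp(I)$. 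The remaining case follows from this together with the previous paragraph: if $\gamma \in \supp(I)$ and $\eta \in G$ with $r(\eta) = s(\gamma)$, then $\gamma^{-1} \in \supp(I)$ and $s(\eta^{-1}) = r(\gamma^{-1})$, so $\eta^{-1}\gamma^{-1} \in \supp(I)$ and hence $\gamma\eta = (\eta^{-1}\gamma^{-1})^{-1} \in \supp(I)$. In particular, whenever $\gamma, \delta \in \supp(I)$ are composable, $\gamma\delta \in \supp(I)$.

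I expect the only genuine obstacle to be the cancellation issue just described; the device of convolving with a function supported on a bisection resolves it, and everything else is routine bookkeeping with range and source maps. A minor technical point is to ensure the two identities for $j$ that I use — its behaviour under the involution and the convolution formula for a product — are valid beyond $C_c(G)$: the first extends by density and norm-continuity of $j$, and the second is precisely Renault's statement recalled in the preliminaries.
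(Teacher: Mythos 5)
Your proof is correct and uses essentially the same device as the paper: multiplying $a$ by compactly supported functions on open bisections so that the convolution series for $j$ collapses to a single nonzero term. The paper does left multiplication, right multiplication, and inversion in one stroke by forming $hak$ with $h,k$ supported on bisections (inversion being the case $\alpha=\beta=\gamma^{-1}$), whereas you obtain right multiplication and inversion from the adjoint identity $j(a^*)(\gamma)=\overline{j(a)(\gamma^{-1})}$; this is only a cosmetic difference.
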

\begin{proof}
Fix $\gamma \in \supp(I)$ and take $\alpha \in G_{r(\gamma)}$ and $\beta \in
G_{s(\gamma)}$. Fix $a \in I$ such that $j(a)\gamma \not= 0$. Take open
bisections $B$ and $C$ containing $\alpha$ and $\beta$, respectively, and
take $h \in C_c(B)$ and $k \in C_c(C)$ with $h(\alpha) = k(\beta) = 1$. Then
$j(hak)(\alpha\gamma\beta) = j(a)(\gamma) \not= 0$, so $\alpha\gamma\beta \in
\supp(I)$. Putting $\beta = s(\gamma)$ gives invariance under left
multiplication, putting $\alpha = r(\gamma)$ gives invariance under right
multiplication, and putting $\alpha = \beta = \gamma^{-1}$ gives invariance
under inversion.
\end{proof}

The groupoid $G$ is \emph{inner-exact} if, for every open invariant subset $U
\subseteq G^{(0)}$, the resulting sequence
\begin{equation}\label{eq:inner-exact}
  0 \to C^*_r(G|_U) \to C^*_r(G) \to C^*_r(G|_{G^{(0)}\setminus U}) \to 0
\end{equation}
is exact, (see~\cite[Definition~3.7]{A-Delaroche} and
also~\cite[Definition~3.5]{Bonicke-Li2017}). Any amenable groupoid is inner
exact. Combining Proposition 4.23 and Theorem 7.10 in \cite{Anatharaman-Delaroche2021},
we also see that the (partial) crossed product groupoid of an exact group acting
(partially) on a second-countable locally compact Hausdorff space is inner-exact. Willett's
example of a nonamenable groupoid whose full and reduced $C^*$-algebras
coincide is not inner-exact~\cite{Willett}.

\begin{remark}
The empty set satisfies the axioms defining a locally compact Hausdorff \'etale groupoid. 
By convention, we take the $C^*$-algebra of the empty groupoid to be the zero $C^*$-algebra; 
in particular~\eqref{eq:inner-exact} collapses to the exact sequence $0 \to C^*_r(G) \to C^*_r(G) \to 0$ if $U \in \{\varnothing, G^{(0)}\}$. 
We thank the referee for pressing us on this point.
\end{remark}

\section{A sandwiching lemma for Hausdorff \'etale
groupoids}\label{sec:sandwich}

The characterisations of the primitive-ideal spaces of graph $C^*$-algebras
of~\cite{Hong-Szymanski} and~\cite{anHuef-Raeburn1997} were founded on the
``sandwiching lemmas''~\cite[Lemma~2.6]{Hong-Szymanski}
and~\cite[Lemma~4.5]{anHuef-Raeburn1997} that show that every primitive ideal
is sandwiched between a pair of uniquely determined gauge-invariant ideals.
Here we observe that a similar sandwiching lemma holds for ideals of reduced
Hausdorff \'etale groupoid $C^*$-algebras.

\begin{definition} \label{def:dynamical-ideal}
We say that an ideal $I$ in a reduced groupoid $C^*$-algebra $C^*_r(G)$ is
\emph{dynamical} if it is generated as an ideal by its intersection with the
diagonal subalgebra $C_0(G^{(0)})$. Equivalently, $I$ is dynamical if it is
of the form $I_U$ for an open invariant subset $U$ of $G^{(0)}$. We say that
$I$ is \emph{purely non-dynamical} if $I\cap C_0(G^{(0)}) = \{0\}$.
\end{definition}

\begin{remark}
According to \cref{def:dynamical-ideal}, the trivial ideal $\{0\}$ is the unique ideal of $C^*_r(G)$ that is both a dynamical ideal and a purely non-dynamical ideal. 
Though linguistically unsatisfactory, this convention simplifies the statements of our key results: 
in \cref{prop:dynamical ideals} treating $\{0\}$ as a dynamical ideal avoids treating the open invariant set $\varnothing$ as a special case; 
but later in \cref{thm:all ideals}, treating $\{0\}$ as a purely non-dynamical ideal avoids treating dynamical ideals as a special case---see \cref{rmk:Theta->dynamical}.
\end{remark}

In the context of Deaconu--Renault groupoids, the dynamical ideals are precisely the usual gauge-invariant ideals---see \cref{prop:gauge-inv ideals}.

\begin{proposition}\label{prop:dynamical ideals}	
  Let $G$ be a locally compact Hausdorff \'etale groupoid.
  The map $U \mapsto I_U$ is a lattice isomorphism from the lattice of open invariant subsets of $X$ to the lattice of
  dynamical ideals of $C^*_r(G)$.
  For each open invariant $U \subseteq G^{(0)}$, we have $I_U \cap C_0(G^{(0)}) = C_0(U)$, and $\supp(I_U) = G|_U$.
\end{proposition}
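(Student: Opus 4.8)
The plan is to prove the three assertions in order, building from the most concrete to the lattice statement. First I would establish the two ``local'' facts for a fixed open invariant $U \subseteq G^{(0)}$: that $I_U \cap C_0(G^{(0)}) = C_0(U)$, and that $\supp(I_U) = G|_U$. For the first, recall that $i_U \colon C^*_r(G|_U) \to C^*_r(G)$ is injective with image $I_U$, and that the conditional expectation $E$ on $C^*_r(G)$ restricts to the expectation on $C^*_r(G|_U)$ (both are given by restriction of functions via $j$). An element $a \in I_U$ lies in $C_0(G^{(0)})$ iff $a = E(a)$ and $j(a)$ is supported on $G^{(0)}$; since $a \in i_U(C^*_r(G|_U))$ forces $\supp(j(a)) \subseteq G|_U$, we get $j(a)$ supported on $G^{(0)} \cap G|_U = U$, so $a \in C_0(U)$. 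Conversely $C_0(U) = C_c(U) ^{-}\subseteq i_U(C_c(G|_U))^{-} = I_U$ and clearly $C_0(U) \subseteq C_0(G^{(0)})$. For $\supp(I_U) = G|_U$: the inclusion $\subseteq$ is immediate because every $a \in I_U$ has $j(a)$ supported in $G|_U$; for $\supseteq$, given $\gamma \in G|_U$ pick an open bisection $B \subseteq G|_U$ with $\gamma \in B$ (possible since $G|_U$ is open) and $f \in C_c(B)$ with $f(\gamma) = 1$, so that $f \in C_c(G|_U) \subseteq I_U$ and $j(f)(\gamma) \neq 0$.

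Next I would record that $C_0(U)$ generates $I_U$ as an ideal: the span of $C_c(G) * C_c(U) * C_c(G)$ is dense in $I_U$ because $C_c(G|_U)$ is spanned by products $g * h * k$ with $h \in C_c(U)$ and $g, k \in C_c(G|_U) \subseteq C_c(G)$ (multiplying by functions supported on open bisections of $G|_U$ adjacent to a given $\gamma \in G|_U$), and $C_c(G|_U)$ is dense in $I_U$. Hence $I_U$ is indeed a dynamical ideal, and the equivalence of the two formulations in \cref{def:dynamical-ideal} is justified: any ideal generated by its intersection with $C_0(G^{(0)})$ equals $I_U$ for $U$ the open set with $I \cap C_0(G^{(0)}) = C_0(U)$ (which is open because ideals of $C_0(G^{(0)})$ correspond to open subsets, and $U$ is invariant by \cref{lem:support invariant} applied to $\supp(I) \supseteq \supp(I \cap C_0(G^{(0)})) = U$).

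Finally, for the lattice isomorphism, I would check that $U \mapsto I_U$ is (i) well-defined into dynamical ideals --- done above; (ii) a bijection --- injectivity follows since $I_U \cap C_0(G^{(0)}) = C_0(U)$ recovers $U$, and surjectivity is the equivalence in \cref{def:dynamical-ideal} just discussed; (iii) order-preserving in both directions --- if $U \subseteq V$ then $C_c(G|_U) \subseteq C_c(G|_V)$ gives $I_U \subseteq I_V$, and conversely $I_U \subseteq I_V$ implies $C_0(U) = I_U \cap C_0(G^{(0)}) \subseteq I_V \cap C_0(G^{(0)}) = C_0(V)$, hence $U \subseteq V$. An order-preserving bijection whose inverse is order-preserving is a lattice isomorphism. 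The main obstacle I anticipate is the careful bookkeeping in step one --- specifically, verifying that $a \in I_U$ genuinely forces $j(a)$ to be supported in the closure $G|_U$ (rather than merely that $C_c(G|_U)$ is so supported), which requires knowing that $j$ is norm-decreasing and that $\{a : \supp(j(a)) \subseteq \overline{G|_U}\}$ is closed; since $G|_U$ is open and invariant, one can in fact argue $\supp(j(a)) \subseteq G|_U$ using that $j \circ i_U$ agrees with the map $j$ for the subgroupoid $G|_U$, whose range lies in $C_0(G|_U)$.
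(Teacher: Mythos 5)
Your proposal is correct and follows essentially the same route as the paper, which simply outsources the key facts to \cite[Proposition~10.3.2 and Theorem~10.3.3]{SimsCRM} (namely that $I_U$ is the closure of $C_c(G|_U)$ and that $U\mapsto I_U$ is injective) and then deduces $I_U\cap C_0(G^{(0)})=C_0(U)$ and $\supp(I_U)=G|_U$ exactly as you do, via continuity of $j(a)$ and the fact that $C_c(G|_U)$ contains $C_c(U)$ and vanishes nowhere on $G|_U$; your version is just self-contained where the paper cites. One small correction: in the surjectivity step, \cref{lem:support invariant} gives invariance of $\supp(I)$, which does not immediately yield invariance of $U=\{x: f(x)\neq 0 \text{ for some } f\in I\cap C_0(G^{(0)})\}$, since a priori $U$ could be a proper subset of $\supp(I)\cap G^{(0)}$; instead, argue directly that for $f\in I\cap C_0(G^{(0)})$ with $f(x)\neq 0$ and $\gamma\in G_x$, choosing $h\in C_c(B)$ with $h(\gamma)=1$ for a bisection $B\ni\gamma$ gives $h f h^*\in I\cap C_0(G^{(0)})$ nonzero at $r(\gamma)$ (this is the conjugation trick used in the first paragraph of the proof of \cref{lem:sandwich}).
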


\begin{proof}
  The map $U \mapsto I_U$ is always an injection \cite[Theorem~10.3.3]{SimsCRM},
  and surjectivity follows from the definition of dynamical ideals.
  Proposition~10.3.2 of~\cite{SimsCRM} shows that $I_U$ is the closure of $C_c(G|_U) \subseteq C_c(G)$.
  In particular, $\supp(I_U) \subseteq G|_U$, and $I_U \cap C_0(G^{(0)}) \subseteq C_0(U)$ by continuity of $j(a)$ for
  each $a\in C^*_r(G)$.
  The reverse containments hold because if $\gamma\in G|_U$, then there is $f\in C_c(G|_U) \subset I_U$ such that $f(\gamma)\neq 0$,
  so $\gamma\in \supp(I_U)$,
  and $C_c(U)$ is contained in $C_c(G|_U)$, so $C_0(U)$ is contained in $I_U$.
\end{proof}

Since lattice isomorphisms preserve least upper bounds and greatest lower bounds, it follows from
\cref{prop:dynamical ideals} that, for example, $I_U \cap I_V = I_{U \cap V}$ and $I_U + I_V = I_{U \cup V}$
for all open invariant $U$ and $V$.

We now state our sandwiching lemma.

\begin{lemma}[The sandwiching lemma]\label{lem:sandwich}
  Let $G$ be a locally compact Hausdorff \'etale groupoid that is inner-exact and let $I$ be an ideal of $C^*_r(G)$.
  Consider the open and invariant subsets
  \[
    U = \{x \in G^{(0)} : f(x) \not= 0\text{ for some } f \in I \cap C_0(G^{(0)})\}
  \]
  and
  \[
    V = \{x \in G^{(0)} : j(a)(x) \not= 0\text{ for some } a \in I\}.
  \]
  Then $I_U$ is the largest dynamical ideal of $C^*_r(G)$ contained in $I$ and $I_V$ is the smallest dynamical ideal of
  $C^*_r(G)$ containing $I$.
\end{lemma}

\begin{proof}
  The set $U$ is open because every $f\in C_0(G^{(0)})$ is continuous.
  In order to see that $U$ is invariant, take $x\in U$ and fix $\gamma\in G_x$.
  We will show that $r(\gamma) \in U$.
  Since $x \in U$ there exists $f\in I\cap C_0(G^{(0)})$ such that $f(x) \neq 0$.  
  Let $B$ be an open bisection containing $\gamma$ and fix $h\in C_c(B)$ such that $h(\gamma) = 1$.
  Then $hfh^* \in I \cap C_c(r(B)) \subseteq I \cap C_0(G^{(0)})$.
  Moreover, $hfh^*(r(\gamma)) = h(\gamma) f(x) h^*(\gamma^{-1}) = f(x) \neq 0$,
  so we conclude that $r(\gamma)\in U$.

  For each $x \in U$, choose $f_x \in I \cap C_0(G^{(0)})$ such that $f_x(x) \not= 0$.
  Then $\{f_x : x \in U\}$ generates $C_0(U)$ as an ideal of $C_0(G^{(0)})$, and it is contained in $I$.
  Hence $I_U \subseteq I$.
  Suppose that $U'$ is an open subset of $G^{(0)}$ strictly containing $U$ and fix $x \in U' \setminus U$ and $f \in
  C_c(U')$ with $f(x) \not= 0$.
  Then $f \in I_{U'}$ but $f \not\in I_U$ by definition of $I_U$.
  In particular, $f\notin I$, so $I_{U'} \not\subseteq I$.
  This proves that $I_U$ is the largest dynamical ideal contained in $I$.

  The set $V$ is open because $j(a)$ is continuous for every $a\in C^*_r(G)$.
  We claim that $V = s(\supp(I))$.
  That $V \subseteq s(\supp(I))$ is obvious. For the reverse inclusion, suppose that
  $a \in I$ and $j(a)(\gamma) \neq 0$. For any open bisection $B$ containing $\gamma^{-1}$ and any $f \in
  C_c(B)$ satisfying $f(\gamma^{-1}) = 1$,
  we have $j(fa)(s(\gamma)) = j(a)(\gamma) \neq 0$.
  Since $j(a)(\gamma) = \overline{j(a^*)(\gamma^{-1})}$, we have $r(\gamma) \in V$ if and only if $s(\gamma)
  \in V$, so $V$ is invariant.

  We now show that $I\subset I_V$.
  Let $E\colon C^*_r(G) \to C_0(G^{(0)})$ be the faithful conditional expectation onto the diagonal
  and observe that $E(I) \subset C_0(V)$.
  Since $G$ is inner-exact, it follows from~\cite[Lemma 3.6]{Bonicke-Li2017} that $I$ is contained in the ideal in
  $C^*_r(G)$ generated by $E(I)$,
  so we find that $I \subset I_V$ as wanted. To see that $V$ is minimal with
  this property, suppose that $V' \subsetneq V$ is an open invariant set. By definition of $V$ there exists $x \in V \setminus
  V'$ and $a \in I$ such that $\jmath(a)(x) \not= 0$.
  Hence $\supp(I) \not\subset \supp(I_{V'})$, so $I \not\subseteq I_{V'}$.
\end{proof}

\begin{remark}
If the ideal $I$ in \cref{lem:sandwich} is a purely non-dynamical ideal of $C^*_r(G)$, then $U$ is empty, and then $I_U = \{0\}$; if $I$ is a dynamical ideal, then $V = U$ and $I_U = I$.
\end{remark}

Consider a pair of nested open invariant subsets $U\subset V \subset
G^{(0)}$. Recall that we obtain $C^*$-homomorphisms $i_V : C_r^*(G|_V) \to
C_r^*(G)$ and $\iota_{V\setminus U} : C^*_r(G|_{V\setminus U}) \to
C^*_r(G|_{G^{(0)}\setminus U})$ extending the canonical inclusion of algebras
of compactly supported functions. For these maps, the diagram
\[
\begin{tikzcd}
  C^*_r(G|_U) \arrow[r, "\iota_U"] & C^*_r(G) \arrow[r, "\pi_U"]  & C^*_r(G|_{G^{(0)}\setminus U})\\
  C^*_r(G|_U)\arrow[r, "\iota_U^V"'] & C^*_r(G|_V) \arrow[r, "\pi_U^V"'] \arrow[u, "\iota_V"] & C^*_r(G|_{V\setminus
  U}) \arrow[u, "\iota_{V\setminus U}"]
\end{tikzcd}
\]
commutes.

\begin{lemma}
  Let $G$ be a locally compact Hausdorff \'etale groupoid that is inner-exact.
  Let $I$ be an ideal of $C^*_r(G)$ and let $U$ and $V$ be the open invariant sets of~\cref{lem:sandwich}.
  Then $J \coloneqq \pi_U^V(\iota_V^{-1}(I))$ is an ideal in $C^*_r(G|_{V\setminus U})$ that is purely non-dynamical
  and has full support.
\end{lemma}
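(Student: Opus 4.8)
The plan is to verify the three assertions separately: that $J$ is an ideal, that it is purely non-dynamical, and that it has full support. That $J$ is an ideal is immediate from the fact that $\iota_V$ is an injective $^*$-homomorphism (so $\iota_V^{-1}(I)$ is an ideal of $C^*_r(G|_V)$) and that $\pi_U^V$ is a surjective $^*$-homomorphism onto $C^*_r(G|_{V\setminus U})$ (surjectivity holds since $\pi_U^V$ restricts to the canonical inclusion $C_c(G|_V) \to C_c(G|_{V\setminus U})$ by restriction, which has dense range); the image of an ideal under a surjective homomorphism is an ideal.

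For the remaining two assertions, the key technical point is to compute the image $\iota_V^{-1}(I)$ inside $C^*_r(G|_V)$ and understand how $\pi_U^V$ acts on it at the level of the function maps $j$. The guiding identity is that the map $j$ is compatible with the inclusions and restrictions: for $a\in C^*_r(G|_V)$ one has $j_{G}(\iota_V(a)) = j_{G|_V}(a)$ on $G|_V$ and zero off $G|_V$, and $j_{G|_{V\setminus U}}(\pi_U^V(a)) = j_{G|_V}(a)|_{G|_{V\setminus U}}$. With these one translates statements about $J$ back to statements about $\iota_V^{-1}(I)$, and ultimately about $I$. Concretely, I would first show $\iota_V^{-1}(I)$ has support exactly $\operatorname{supp}(I)$ (which lies in $G|_V$ since $\operatorname{supp}(I) \subseteq V = s(\operatorname{supp}(I))$ and $\operatorname{supp}(I)$ is invariant by~\cref{lem:support invariant}, so both range and source of every element of $\operatorname{supp}(I)$ lie in $V$); indeed $\iota_V(\iota_V^{-1}(I)) = I \cap I_V = I$ since $I \subseteq I_V$ by~\cref{lem:sandwich}, and $\iota_V$ preserves supports. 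Hence $\operatorname{supp}(J) = \operatorname{supp}(\iota_V^{-1}(I)) \cap G|_{V\setminus U} = \operatorname{supp}(I) \cap G|_{V\setminus U} = \operatorname{supp}(I) \setminus G|_U$.

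Full support of $J$ then means $\operatorname{supp}(I) \setminus G|_U$ is dense in $G|_{V\setminus U}$. Since $\operatorname{supp}(I)$ is invariant and $s(\operatorname{supp}(I)) = V$, for each $x\in V\setminus U$ there is some $\gamma\in\operatorname{supp}(I)$ with $s(\gamma)=x$; every open bisection neighbourhood in $G|_{V\setminus U}$ of a point $\eta$ with $s(\eta)\in V\setminus U$ can be translated (using the argument in the proof of~\cref{lem:support invariant}, multiplying by suitable elements of $C_c$ of bisections) into a neighbourhood meeting $\operatorname{supp}(I)$; I would make this precise to conclude density. For pure non-dynamicality I must show $J \cap C_0(V\setminus U) = \{0\}$ and $J\neq\{0\}$. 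Non-triviality follows since $V\setminus U$ is nonempty (as $U\subsetneq V$ strictly: $U$ is the support of $I\cap C_0(G^{(0)})$ while $V = s(\operatorname{supp}(I))$, and if they were equal $I$ would be dynamical, hence $I = I_U$, but then by~\cref{prop:dynamical ideals} $\operatorname{supp}(I)=G|_U$, forcing $V = s(G|_U) = U$ — consistent — so strict containment actually needs the hypothesis that $V$ is the \emph{smallest} dynamical ideal's set containing $I$; I will need to argue that $I\neq I_U$, which holds because $\operatorname{supp}(I)\not\subseteq G|_U$ whenever $I$ is not dynamical, and handle the degenerate case $I=I_U$ where $J=\{0\}$ if the lemma intends $I$ non-dynamical — I expect the lemma implicitly assumes $I$ is not dynamical, or that $J=0$ is permitted). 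The heart of the matter, and the step I expect to be the main obstacle, is the vanishing $J\cap C_0(V\setminus U)=\{0\}$: an element of this intersection lifts to some $a\in \iota_V^{-1}(I)$ with $\pi_U^V(a)$ supported in $(V\setminus U)^{(0)}$, so $j(a)$ is supported on $G|_U \cup (V\setminus U)^{(0)}$; applying the conditional expectation $E$ on $C^*_r(G|_V)$, which preserves $I$ by~\cite[Lemma 3.6]{Bonicke-Li2017}-style arguments, one gets that $E(a) - (\text{a function supported in }U)\in I$ is supported in $(V\setminus U)^{(0)}\subseteq G^{(0)}$ but its restriction to $U$ is zero, so $E(a)\in I\cap C_0(G^{(0)})$ is supported off $U$; but $U$ is precisely the support of $I\cap C_0(G^{(0)})$, forcing $E(a)|_{V\setminus U}=0$ and hence $j(\pi_U^V(a))|_{(V\setminus U)^{(0)}} = 0$, i.e. the original element of $J\cap C_0(V\setminus U)$ is zero. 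Care is needed here because $a$ need not itself be a diagonal element, so one works with $E(a)$ and the compatibility $\pi_U^V\circ E_{G|_V} = E_{G|_{V\setminus U}}\circ\pi_U^V$.
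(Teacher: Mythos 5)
Your overall strategy is the same as the paper's: both arguments reduce full support to the invariance of $\supp(I)$ together with the identity $V = s(\supp(I))$ from \cref{lem:sandwich}, and both reduce pure non-dynamicality to the definition of $U$ by producing a lift in $C_0(V)$ of a given element of $J\cap C_0(V\setminus U)$. However, the justification you give for the crucial step is wrong as written: you assert that the conditional expectation $E$ on $C^*_r(G|_V)$ ``preserves $I$'' by a \cite[Lemma~3.6]{Bonicke-Li2017}-style argument. That lemma says only that $I$ is contained in the ideal \emph{generated by} $E(I)$; it does not say $E(I)\subseteq I$. Indeed, if conditional expectations preserved ideals, then every nonzero ideal would meet $C_0(G^{(0)})$ nontrivially (apply $E$ to a nonzero positive element and use faithfulness), and purely non-dynamical ideals --- the objects this paper is about --- could not exist. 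So the sentence concluding ``$E(a)\in I\cap C_0(G^{(0)})$'' is not supported by what precedes it.

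The step is repairable using an ingredient you already name, the intertwining $\pi_U^V\circ E_{G|_V} = E_{G|_{V\setminus U}}\circ\pi_U^V$. Since $f\coloneqq \pi_U^V(a)$ already lies in the diagonal $C_0(V\setminus U)$, this identity gives $\pi_U^V(E(a)) = f = \pi_U^V(a)$, so $a - E(a)\in\ker\pi_U^V$; the commuting square $\iota_{V\setminus U}\circ\pi_U^V = \pi_U\circ\iota_V$ and inner-exactness then give $\iota_V(a-E(a))\in\ker\pi_U = I_U\subseteq I$, whence $\iota_V(E(a))\in I\cap C_0(G^{(0)})\subseteq C_0(U)$ and $f = E(a)|_{V\setminus U} = 0$. (The paper does exactly this with an arbitrary lift $\tilde f\in C_0(V)$ of $f$ in place of $E(a)$; any two lifts differ by an element of $\ker\pi_U^V$, so the choice is immaterial.) Two smaller points: full support is the equality $\supp(J) = G|_{V\setminus U}$, not density, but the translation mechanism of \cref{lem:support invariant} that you invoke actually yields every point, since $\gamma'\in\supp(I)$ with $s(\gamma')=s(\gamma)$ forces $\gamma = (\gamma\gamma'^{-1})\gamma'\in\supp(I)$ and hence $\supp(I)=G|_V$; and your concern about the degenerate case $U=V$ is legitimate (the bijection of \cref{thm:all ideals} needs the triple $(U,U,\{0\})$ to be admissible when $I=I_U$ is dynamical) but is a matter of convention rather than a flaw in the argument.
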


\begin{proof}
  It is clear that $J$ is an ideal of $C^*_r(G|_{V\setminus U})$.
  In order to see that $J$ is purely non-dynamical, take $f\in J\cap C_c(V\setminus U)$.
  Pick $\tilde{f}\in \pi_U^{-1}(f)$ and note that $\tilde{f}\in C_0(V)$ extends $f$ (because $\pi_U^V$ implements
  restriction of functions).
  Then $\iota_V(\tilde{f})\in I$ by definition of $J$.
  If $x\in V\setminus U$, then $\iota_V(\tilde{f})(x) = 0$ by definition of $U$, so $f(x) = 0$.
  Hence $f=0$. So $J$ is purely non-dynamical.

  Next we show that $J$ has full support.
  Clearly, $\supp(J) \subset G|_{V\setminus U}$ (since $J$ is an ideal of $C^*_r(G|_{V \setminus U})$). We must
  prove the reverse inclusion.
  Fix $\gamma \in G$ with $s(\gamma) \in V \setminus U$.
  Since $V = s(\supp(I))$, there exists $a \in I$ such that $j(a)(\gamma) \not= 0$.
  The inclusion map $C^*_r(G|_V) \to I_V \subseteq C^*_r(G)$ extends the canonical inclusion $C_c(G_V) \to C_c(G)$,
  so it intertwines the maps $j^V\colon C^*_r(G|_V) \to C_0(G|_V)$ and $j\colon C^*_r(G) \to C_0(G)$.
  Therefore $j^V(\iota_V^{-1}(a))(\gamma) = j(a)(\gamma) \not= 0$, and we conclude that $\supp(J) = G|_{V\setminus U}$.
\end{proof}

Let $\T(G)$ be the collection of triples $(U,V,J)$ where $U \subset V\subset G^{(0)}$ are nested open and invariant
subsets
and $J$ is a purely non-dynamical ideal in $C^*_r(G|_{V\setminus U})$ with full support.

\begin{theorem}\label{thm:all ideals}
  Let $G$ be a locally compact Hausdorff \'etale groupoid that is inner-exact.
  There is a bijection $\Theta$ from $\T(G)$ to the collection of ideals of $C^*_r(G)$ such that
  \[
    \Theta(U,V,J) = \pi_U^{-1}(\iota_{V\setminus U}(J))
  \]
  for all $(U,V,J)\in \T(G)$.
  The inverse $\Theta^{-1}$ takes $I \lhd C^*_r(G)$ to the triple $(U,V,J)\in \T(G)$
  consisting of the sandwich sets $U\subset V$ and the purely non-dynamical ideal $J \lhd C^*_r(G|_{V\setminus U})$ with
  full support
  of~\cref{lem:sandwich}.
\end{theorem}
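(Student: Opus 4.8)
The plan is to establish that $\Theta$ and the map $I \mapsto (U,V,J)$ described in \cref{lem:sandwich} and the preceding lemma are mutually inverse bijections. Since both the domain $\T(G)$ and the codomain (ideals of $C^*_r(G)$) are already known to receive well-defined maps in both directions — the forward map $\Theta(U,V,J) = \pi_U^{-1}(\iota_{V\setminus U}(J))$ is manifestly an ideal, and the backward map lands in $\T(G)$ by \cref{lem:sandwich} together with the two lemmas above — the entire content reduces to checking that the two composites are the identity.

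First I would verify that $\Theta^{-1} \circ \Theta = \id_{\T(G)}$. Starting from $(U,V,J) \in \T(G)$, set $I = \pi_U^{-1}(\iota_{V\setminus U}(J))$ and compute the sandwich sets of $I$. The key observations are: $I \cap C_0(G^{(0)})$ consists of those diagonal functions whose image under $\pi_U$ lies in $\iota_{V\setminus U}(J) \cap C_0(G^{(0)} \setminus U)$, which is $\{0\}$ since $J$ is purely non-dynamical; hence the lower set of $I$ is exactly $U$. For the upper set, one uses that $\iota_{V\setminus U}$ intertwines the $j$-maps and that $J$ has full support on $G|_{V\setminus U}$, so that $s(\supp(I)) = U \cup s(\supp(\iota_{V\setminus U}(J))) = U \cup (V \setminus U) = V$; here inner-exactness is used implicitly via \cref{lem:sandwich}'s identification $V = s(\supp(I))$. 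Finally, one must check that the purely non-dynamical full-support ideal of $C^*_r(G|_{V\setminus U})$ that \cref{lem:sandwich} extracts from this $I$ is $J$ itself, i.e. $\pi_U^V(\iota_V^{-1}(I)) = J$; this follows by chasing the commuting square, using exactness of the bottom row to identify $\iota_V^{-1}(\pi_U^{-1}(\iota_{V\setminus U}(J)))$ with $(\pi_U^V)^{-1}(J)$ and then applying $\pi_U^V$, which is surjective.

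Next I would verify $\Theta \circ \Theta^{-1} = \id$: given an ideal $I \lhd C^*_r(G)$ with sandwich data $(U,V,J)$, I must show $I = \pi_U^{-1}(\iota_{V\setminus U}(J))$ where $J = \pi_U^V(\iota_V^{-1}(I))$. The inclusion $I \subseteq \pi_U^{-1}(\iota_{V\setminus U}(J))$ is straightforward from the definitions and the commuting diagram, since $\pi_U(I) \subseteq \iota_{V\setminus U}(C^*_r(G|_{V\setminus U}))$ by \cref{lem:sandwich} (as $I \subseteq I_V$, so $\pi_U(I) \subseteq \pi_U(I_V) = \iota_{V\setminus U}(C^*_r(G|_{V\setminus U}))$), and $\pi_U(I)$ maps into $J$ under $\iota_{V\setminus U}^{-1}$. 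The reverse inclusion is the crux: one needs that $I$ is recovered from the pair consisting of its image $\pi_U(I)$ in the quotient by $I_U$ and the fact that $I_U \subseteq I$. This is exactly where inner-exactness does the real work — exactness of \eqref{eq:inner-exact} for the set $U$ gives $I_U = \ker \pi_U$, so $I \supseteq I_U = \ker\pi_U$ forces $I = \pi_U^{-1}(\pi_U(I))$, and then $\pi_U(I) = \iota_{V\setminus U}(J)$ by construction.

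The main obstacle I anticipate is the bookkeeping in the second composite: proving $I \supseteq \ker \pi_U$, equivalently $I_U \subseteq I$, for the \emph{specific} $U$ attached to $I$. But this is precisely the content of the first conclusion of \cref{lem:sandwich} — $I_U$ is the largest dynamical ideal contained in $I$ — so it is already in hand, and combined with inner-exactness ($I_U = \ker\pi_U$) the argument closes. The remaining care is purely diagram-chasing through the commuting square, keeping track of which maps are injective (the $\iota$'s) and which are surjective (the $\pi$'s with the appropriate kernels), and invoking exactness of both the top and bottom rows of the diagram. Thus the proof is essentially an assembly of \cref{lem:sandwich}, the two subsequent lemmas, and inner-exactness, with no genuinely new estimate required.
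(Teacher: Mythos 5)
Your proposal is correct and follows essentially the same route as the paper: both proofs reduce the theorem to checking that the two composites are identities, using \cref{lem:sandwich}, the preceding lemma on $J = \pi_U^V(\iota_V^{-1}(I))$, the commuting identity $\iota_{V\setminus U}\circ\pi_U^V = \pi_U\circ\iota_V$, and inner-exactness to identify $\ker\pi_U$ with $I_U$ so that $I = \pi_U^{-1}(\pi_U(I))$. The only cosmetic difference is that you recover the upper sandwich set of $\Theta(U,V,J)$ by directly computing $s(\supp(\Theta(U,V,J)))$ from the full support of $J$, whereas the paper routes the containment $\Theta(U,V,J)\subset I_V$ through the conditional expectation and \cite[Lemma 3.6]{Bonicke-Li2017}; both arguments are valid.
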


\begin{remark}\label{rmk:Theta->dynamical}
It is important in the statement of \cref{thm:all ideals} that $\varnothing$ is a groupoid, that its reduced $C^*$-algebra is $\{0\}$, 
and that $\{0\}$ is a purely non-dynamical ideal of $C^*_r(G)$: 
the dynamical ideals of $C^*_r(G)$ are in the range of $\Theta$ because each $I_U = \Theta(U, U, \{0\})$.
\end{remark}

\begin{proof}[Proof of Theorem~\ref{thm:all ideals}]
  The map $\Theta$ takes values in the ideals of $C^*_r(G)$ by definition.

  To see that $\Theta$ is injective, fix $(U, V, J) \in \mathcal{T}(G)$ and let $I = \Theta(U, V, J)$.
  We will prove that $U$ and $V$ are the sandwiching sets $U_I, V_I$ obtained from~\cref{lem:sandwich} applied to $I$,
  and that $J = \iota_{V\setminus U}^{-1}(\pi_U(I))$.
  This defines a left inverse to $\Theta$, defined on the image of $\Theta$, which implies that $\Theta$ is injective.

  We have $I_U = \pi_U^{-1}(0) \subset \Theta(U,V,J)$ by definition of $\Theta$.
  If $U' \subset G^{(0)}$ is an open invariant set containing $U$ such that $I_{U'} \subset \Theta(U,V,J)$,
  then $\pi_U(I_{U'}) \subset \pi_U(\Theta(U,V,J)) = \iota_{V\setminus U}(J)$ and the latter has trivial intersection
  with $G^{(0)}\setminus U$ (since $J$ is purely non-dynamical).
  As $\pi_U$ implements restriction of functions, we see that $I_{U'}\cap C_0(G^{(0)}) \subset C_0(U)$, so $U' = U$.
  Let $E$ be the faithful conditional expectation of $C^*_r(G)$ onto $C_0(G^{(0)})$.
  Observe that $E(\Theta(U,V,J)) \subset C_0(V)$.
  By~\cite[Lemma 3.6]{Bonicke-Li2017}, $\Theta(U,V,J)$ is contained in the ideal generated by $E(\Theta(U,V,J))$, so we
  see that $\Theta(U,V,J) \subset I_V$.
  In particular, $\supp(\Theta(U,V,J)) \subset \supp(I_V) = G|_V$.
  On the other hand, since $\supp(J) = G|_{V\setminus U}$ we have $G|_V \subset \supp(\Theta(U,V,J))$.
  Now if $V'$ is a proper open invariant subset of $V$ such that $\Theta(U,V,J) \subset I_{V'}$, then $G|_V =
  \supp(\Theta(U,V,J)) = \supp(I_{V'}) = G|_{V'} \subsetneq G|_V$
  which contradicts our observation above.
  Therefore, $V$ is the smallest such open invariant subset.
  Finally, observe that
    \begin{equation*}
      \iota_{V\setminus U}^{-1}(\pi_U(I)) = \iota_{V\setminus U}^{-1}(\pi_U(\Theta(U, V, J))) = \iota_{V\setminus
      U}^{-1}(\iota_{V\setminus U}(J)) = J,
    \end{equation*}
  and this completes the proof that $\Theta$ is injective.

  To see that it is surjective, fix an ideal $I$ of $C^*_r(G)$.
  By~\cref{lem:sandwich}, there are open invariant sets $U \subseteq V \subseteq G^{(0)}$ such that $I_U \subseteq I
  \subseteq I_V$ and
  $\supp(\pi^V_U(I/I_U)) = G|_{V \setminus U}$.
  Since $I \subseteq I_V = \iota_V(C^*(G|_V))$, we obtain an ideal $\iota_V^{-1}(I)$ of $C^*(G|_V)$.
  Let $J \coloneqq \pi^V_U(\iota_V^{-1}(I))$.
  We claim that $K \coloneqq \Theta(U, V, J)$ is equal to $I$, which will establish surjectivity of $\Theta$.
  By definition, both $I$ and $K$ are ideals of $C^*_r(G)$ that contain $I_U$, so it suffices to show that $I/I_U =
  K/I_U$.
  By inner-exactness, $\pi_U \colon C^*_r(G) \to C^*_r(G|_{G^{(0)}\setminus U})$ has kernel $I_U$, so it suffices to
  show  that $\pi_U(K) = \pi_U(\Theta(U, V, J))$.
  By definition of $\Theta$, we have $\pi_U(K) = \iota_{V \setminus U}(J) = \iota_{V \setminus
  U}(\pi^V_U(\iota_V^{-1}(I)))$.
  By definition of the two maps, $\iota_{V \setminus U} \circ \pi^V_U = \pi_U \circ \iota_V$, so we obtain $\pi_U(K) =
  \pi_U(I)$ as required.
\end{proof}

To link~\cref{lem:sandwich} back to the results~\cite[Lemma~2.6]{Hong-Szymanski}
and~\cite[Lemma~4.5]{anHuef-Raeburn1997} that inspired it,
we observe that for Deaconu--Renault groupoids, the dynamical ideals employed above are precisely the gauge-invariant
ideals of the $C^*$-algebra of a Deaconu--Renault groupoid.
The result is certainly well-known, but we are not aware that it has been recorded explicitly elsewhere in this
generality.
For the case of finitely aligned higher-rank graphs, this was observed in~\cite[Lemma 7.5]{Li2021}.

Recall that if $T \colon \N^d \curvearrowright X$ is an action by $d$ local homeomorphisms,
then we let $G_T$ denote the Deaconu--Renault groupoid of $T$ as in, for example, \cite[Section~3]{Sims-Williams}. An ideal $I$ of $C^*(G_T)$ is \emph{gauge-invariant} if the canonical gauge action $\gamma$ of $\TT^d$ on $C^*_r(G_T)$
satisfies $\gamma_z(I) \subseteq I$ for all $z \in \TT^d$.

\begin{proposition}\label{prop:gauge-inv ideals}
  Let $X$ be a locally compact Hausdorff space and suppose $T \colon \N^d \curvearrowright X$ is an action on $X$ by
  $d$ commuting local homeomorphisms.
  The map that carries an open invariant subset $U$ of $X$ to the ideal $I_U$ generated by $C_0(U)$ is a
  lattice isomorphism
  from the lattice of open invariant subsets of $X$ to the lattice of gauge-invariant ideals of $C^*(G_T)$.
\end{proposition}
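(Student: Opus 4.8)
The plan is to reduce to \cref{prop:dynamical ideals}. Since Deaconu--Renault groupoids are amenable, $C^*(G_T)=C^*_r(G_T)$, so \cref{prop:dynamical ideals} already gives a lattice isomorphism $U\mapsto I_U$ from the open invariant subsets of $X=G_T^{(0)}$ onto the \emph{dynamical} ideals of $C^*(G_T)$; it therefore suffices to show that an ideal of $C^*(G_T)$ is gauge-invariant if and only if it is dynamical. One direction is immediate: the gauge action $\gamma$ of $\TT^d$ fixes $C_0(X)$ pointwise, because the canonical cocycle $c\colon G_T\to\Z^d$ vanishes on $G_T^{(0)}$; hence each $\gamma_z$ fixes $C_0(U)$, and as $\gamma_z$ is an automorphism carrying a generating set of $I_U$ onto itself we get $\gamma_z(I_U)=I_U$.

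For the converse, let $I$ be a gauge-invariant ideal and let $U=\{x\in X: f(x)\ne 0 \text{ for some } f\in I\cap C_0(X)\}$ be the open invariant set produced by \cref{lem:sandwich}, so that $I_U\subseteq I$; the goal is $I\subseteq I_U$. The main device is the faithful conditional expectation $\Phi=\int_{\TT^d}\gamma_z(\,\cdot\,)\,dz$ of $C^*_r(G_T)$ onto its gauge-fixed-point algebra, which is $C^*_r(\ker c)$ for the clopen subgroupoid $\ker c=c^{-1}(\{0\})$. First I would note that $\ker c$ is principal --- an element of $G_T$ with trivial degree and equal source and range is a unit --- hence effective, and that it is amenable, being an open subgroupoid of $G_T$; so by Renault's theorem \cite{Renault1991} every ideal of $C^*_r(\ker c)$ is dynamical. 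Next, since $C^*_r(\ker c)$ is the fixed-point algebra, $\Phi(axb)=a\,\Phi(x)\,b$ for $a,b\in C^*_r(\ker c)$, so $\overline{\Phi(I)}$ is an ideal of $C^*_r(\ker c)$; and it is contained in $I$ because $I$ is gauge-invariant and norm-closed. Being dynamical, $\overline{\Phi(I)}$ is generated by $\overline{\Phi(I)}\cap C_0(X)\subseteq I\cap C_0(X)\subseteq C_0(U)$, which forces $\overline{\Phi(I)}\subseteq C^*_r((\ker c)|_U)\subseteq C^*_r(G_T|_U)=I_U$.

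It remains to upgrade $\Phi(I)\subseteq I_U$ to $I\subseteq I_U$, and here I would pass to the quotient: $I_U$ is gauge-invariant by the first paragraph, so $\gamma$ descends to $C^*_r(G_T)/I_U$ and its averaging conditional expectation $\overline{\Phi}$ is again faithful and satisfies $\overline{\Phi}\circ q=q\circ\Phi$, where $q$ is the quotient map. For $a\in I$ with $a\geq 0$ this gives $\overline{\Phi}(q(a))=q(\Phi(a))=0$, so $q(a)=0$ by faithfulness; the general case follows because $a^*a\in I_U$ forces $a\in I_U$ ($I_U$ being hereditary). Thus $I=I_U$, and together with the first paragraph this proves the proposition. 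The step I expect to be the real obstacle is this passage to the fixed-point algebra: identifying it as $C^*_r(\ker c)$, recognising that $\ker c$ is principal so that Renault's dynamical-ideal theorem applies there, and verifying that $\overline{\Phi(I)}$ is an ideal of it. The concluding quotient argument is a standard trick, and the openness and invariance of $U$ are already handled in \cref{lem:sandwich}; one could alternatively work with the Fourier components $\Phi_k(a)=\int_{\TT^d}z^{-k}\gamma_z(a)\,dz$, but routing through $C^*_r(\ker c)$ keeps the bookkeeping lightest.
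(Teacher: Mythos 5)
Your argument is correct, but it follows a genuinely different route from the paper's. The paper's proof is a citation-heavy one: it reruns the second paragraph of the proof of \cite[Theorem~10.3.3]{SimsCRM} for a gauge-invariant ideal $I$, and at the point where effectiveness of the reduction $GW$ would be needed, it instead identifies $GW$ with the groupoid of a topological higher-rank graph and invokes the gauge-invariant uniqueness theorem of \cite[Corollary~5.21]{CarLarSimVit}. You instead give the classical an Huef--Raeburn style argument: average over the gauge action to land in the core $C^*_r(\ker c)$, observe that $\ker c$ is principal (an element of degree zero with equal range and source is literally the unit $(x,0,x)$) so all of its ideals are dynamical, deduce $\overline{\Phi(I)}\subseteq I_U$, and then lift this to $I\subseteq I_U$ via faithfulness of the induced expectation on $C^*_r(G_T)/I_U$. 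All the individual steps check out: the fixed-point algebra of the gauge action is indeed $C^*_r(c^{-1}(0))$, the averaging expectation over a compact group action is automatically faithful (so its descent to the quotient by a gauge-invariant ideal is too), and $a^*a\in I_U\Rightarrow a\in I_U$ is standard. The one citation you should be careful with is ``Renault's theorem'' for the claim that every ideal of $C^*_r(\ker c)$ is dynamical: \cite{Renault1991} assumes second countability, which the proposition does not. This is easily repaired, since $\ker c$ is principal and hence every reduction to a closed invariant set is effective, so the claim follows from \cref{prp:BCFS extension}\labelcref{i:if_effective} applied to each such reduction together with inner-exactness of $\ker c$ (which holds by amenability). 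What your route buys is self-containedness --- it proves the relevant gauge-invariant uniqueness statement rather than importing it --- at the cost of the bookkeeping around the core; the paper's route is shorter on the page but leans on the product-system machinery of \cite{CarLarSimVit}.
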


\begin{proof}
  Since $\gamma_z(f) = f$ for all $z \in \TT^k$ and $f \in C_0(G^{(0)})$, the ideals of $C^*(G_T)$ generated by subsets of $C_0(G^{(0)})$ are gauge invariant. In particular, each $I_U$ is gauge-invariant.

  The map $U \mapsto I_U$ is an injection \cite[Theorem~10.3.3]{SimsCRM}.
  For surjectivity, we follow the second paragraph of the proof of~\cite[Theorem~10.3.3]{SimsCRM},
  dropping the assumption that $G$ is strongly effective but fixing a \emph{gauge-invariant} ideal $I$, until its
  penultimate sentence.
  At that point, while $GW$ need not be effective, we observe that $GW$ is identical to the groupoid of the topological
  higher-rank graph $\Lambda$ defined by
  $\Lambda^n = X \times \{n\}$ for all $n$, whose range and source maps are given by $s(x,n) = (T^n(x), 0)$ and $r(x,
  n) = (x , 0)$
  and with the factorisation rules $(x,m)(T^m(x), n) = (x, m+n) = (x, n)(T^n(x), m)$.
  We may now apply the gauge-invariant uniqueness theorem of~\cite[Corollary~5.21]{CarLarSimVit} in place
  of~\cite[Theorem~10.3.3]{SimsCRM} to see that $\tilde{\pi}$ is injective,
  and the surjectivity of $U \mapsto I_U$ follows.
  The final statement follows from~\cref{prop:dynamical ideals}.
\end{proof}

\section{Effectiveness at a unit and the obstruction ideal}\label{sec:obstruction ideal}

In this section we introduce the notions of effectiveness at a unit and joint
effectiveness at a unit for \'etale groupoids. The key property that emerges
is that of being jointly effective where effective. This is inspired by the
notions in~\cite[Section 7]{Ara-Lolk2018} of (strong) topological freeness at
a point for a partial group action. The points in the unit space of a
groupoid that are not effective comprise an open invariant set and hence a
dynamical ideal that we call the obstruction ideal. Our main results in this
section (\cref{thm:obstruction-ideal,cor:minimal}) say that if a Hausdorff
\'etale groupoid is inner-exact and its full and reduced $C^*$-algebras
coincide (Anantharaman-Delaroche calls this the weak containment property
\cite{A-Delaroche}), then the obstruction ideal contains all purely
non-dynamical ideals, and is minimal with this property.

Recall that a groupoid $G$ is \emph{effective} if the interior $\I^\circ(G)$
of the isotropy is equal to the unit space $G^{(0)}$. For $x \in G^{(0)}$, we
write $\I^\circ(G)_x$ for the intersection of $G_x$ with $\I^\circ(G)$.

\begin{definition} \label{def:effective-at-a-unit}
  A locally compact Hausdorff \'etale groupoid $G$ is \emph{effective at a unit $x\in G^{(0)}$} if $\I^\circ(G)_x = \{x\}$.
  Equivalently, $G$ is effective at $x$ if for any nontrivial isotropy element $\gamma\in \I(G)_x\setminus \{x\}$
  and any open bisection $B$ in $G\setminus G^{(0)}$ containing $\gamma$
  there exists $y\in s(B)$ such that $r(By) \neq y$.
  When the groupoid is understood, we may just say that the unit is effective.
  We let $G^{(0)}_{\eff}$ denote the collection of effective units.
\end{definition}

Any unit with trivial isotropy is effective. An isolated unit with nontrivial
isotropy is not effective.

We have the following general description of the units that are not effective.
This also shows that our terminology is consistent with the literature on effective groupoids.

\begin{lemma}\label{lem:effective-points}
  Let $G$ be a locally compact Hausdorff \'etale groupoid. We have
  \begin{equation}\label{eq:iso eqn}
    G^{(0)}\setminus G^{(0)}_{\eff} = s({\I^\circ(G)} \setminus G^{(0)}),
  \end{equation}
  and this is an open and invariant subset of $G^{(0)}$.
  Consequently, $G^{(0)}_{\eff}$ is closed and invariant.
  Moreover, $G$ is effective if and only if $G$ is effective at each of its units.
\end{lemma}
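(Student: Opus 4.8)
The plan is to obtain the displayed equality~\eqref{eq:iso eqn} by unwinding the definitions, and then to read the remaining assertions off from it; the only assertion requiring genuine work will be that $G|_{G^{(0)}_{\eff}}$ is effective.

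For the equality, observe that $x\notin G^{(0)}_{\eff}$ means there is $\gamma\in G_x\cap\I^\circ(G)$ with $\gamma\neq x$; since $\I^\circ(G)\subseteq\I(G)$ such a $\gamma$ has $r(\gamma)=s(\gamma)=x$, and the only element of $G^{(0)}$ with source $x$ is $x$ itself, so $\gamma\neq x$ is equivalent to $\gamma\notin G^{(0)}$. Hence $x\notin G^{(0)}_{\eff}\iff x\in s(\I^\circ(G)\setminus G^{(0)})$. This set is open because $\I^\circ(G)$ is open, $G^{(0)}$ is closed ($G$ being Hausdorff), so $\I^\circ(G)\setminus G^{(0)}$ is open, and $s$ is an open map ($G$ being \'etale). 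For invariance I would use the standard fact that $\I^\circ(G)$ is an open subgroupoid of $G$ stable under conjugation: if $\gamma\in\I^\circ(G)$ and $\eta\in G$ with $s(\eta)=r(\gamma)$, pick open bisections $A\ni\eta$ and $B$ with $\gamma\in B\subseteq\I(G)$; then $ABA^{-1}$ is an open bisection lying in $\I(G)$, hence in $\I^\circ(G)$, and it contains $\eta\gamma\eta^{-1}$. So if $x=s(\gamma)$ with $\gamma\in\I^\circ(G)\setminus G^{(0)}$ and $\eta\in G_x$, then $\eta\gamma\eta^{-1}\in\I^\circ(G)$, it is not a unit (otherwise $\gamma=\eta^{-1}\eta=s(\eta)\in G^{(0)}$), and $s(\eta\gamma\eta^{-1})=r(\eta)$; thus $r(\eta)\in s(\I^\circ(G)\setminus G^{(0)})$, and applying the same to $\eta^{-1}$ gives invariance. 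Consequently $G^{(0)}_{\eff}$, being the complement of an open invariant set, is closed and invariant, so $G|_{G^{(0)}_{\eff}}$ is again a locally compact Hausdorff \'etale groupoid. The final ``moreover'' is immediate from~\eqref{eq:iso eqn}: $G$ is effective $\iff\I^\circ(G)=G^{(0)}\iff\I^\circ(G)\setminus G^{(0)}=\emptyset\iff s(\I^\circ(G)\setminus G^{(0)})=\emptyset\iff G^{(0)}_{\eff}=G^{(0)}$, i.e., iff $G$ is effective at each of its units.

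It remains to show $G|_{G^{(0)}_{\eff}}$ is effective, which I expect to be the main obstacle. Put $C=G^{(0)}_{\eff}$. Since $C=(G|_C)^{(0)}\subseteq\I^\circ(G|_C)$ always, and since $\I^\circ(G)\cap G|_C=C$ (for if $\gamma\in\I^\circ(G)$ has $r(\gamma)\in C$ then $\gamma\in\I^\circ(G)_{r(\gamma)}=\{r(\gamma)\}$ by definition of $C$), it is enough to prove the inclusion $\I^\circ(G|_C)\subseteq\I^\circ(G)$: then $\I^\circ(G|_C)\subseteq\I^\circ(G)\cap G|_C=C$, so $G|_C$ is effective. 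Concretely, a non-unit $\gamma\in\I^\circ(G|_C)$ satisfies $\gamma\notin G^{(0)}$, and, $G|_C$ carrying the subspace topology, comes with an open bisection $B$ of $G$ with $\gamma\in B\subseteq G\setminus G^{(0)}$ whose associated partial homeomorphism $\theta_B$ of $G^{(0)}$ restricts to the identity on $r(B)\cap C$; one must upgrade this to $\theta_B=\id$ on an open neighbourhood of $r(\gamma)$, which places $\gamma$ in $\I^\circ(G)$. This is the delicate step and does not follow by soft topology alone, since $C$ is closed but need not be open, so $r(B)\cap C$ need not be a neighbourhood of $r(\gamma)$. The leverage available is exactly that $C$ is the effective locus---so $\I^\circ(G)$ is trivial over $C$---and the intended argument is to play the equivalent description of effectiveness at the unit $r(\gamma)\in C$ from~\cref{def:effective-at-a-unit} against any points of $r(B)\setminus C$ on which $\theta_B$ acts nontrivially and which accumulate at $r(\gamma)$, forcing a contradiction.
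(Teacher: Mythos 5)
Your treatment of the displayed equality, of openness, and of invariance via conjugation of bisections (your $ABA^{-1}$ is the paper's $B_\gamma B_\eta B_\gamma^{-1}$), together with the final ``moreover'', is correct and is essentially the argument the paper gives. The one point you leave open --- that $G|_{G^{(0)}_{\eff}}$ is effective --- is indeed a gap in your proposal, but your suspicion that it ``does not follow by soft topology alone'' is more than justified: that assertion is false as stated, so the contradiction argument you sketch cannot be completed. The paper's own \cref{eg:ExelsX} (Exel's cross) is a counterexample: there $G^{(0)}_{\eff}$ is the single point $(0,0)$, and the reduction of the groupoid to it is the isotropy group $(\Z/2\Z)^2$ of the origin, a nontrivial discrete group, whose isotropy interior is all of it rather than just the unit space. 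The same failure occurs whenever a point $x\in G^{(0)}_{\eff}$ is isolated in $G^{(0)}_{\eff}$ but carries nontrivial isotropy: then $\I(G)_x$ is relatively open in $G|_{G^{(0)}_{\eff}}$, so every isotropy element at $x$ lies in the relative interior. Your reduction of the problem to the inclusion $\I^\circ(G|_C)\subseteq\I^\circ(G)$ isolates exactly what goes wrong: interiors computed in the subspace $G|_C$ can be strictly larger than interiors computed in $G$ when $C$ is closed but not open.

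The paper's proof does not fill this gap either; it establishes the equality, openness and invariance exactly as you do, and then dismisses the remaining claims with ``the final statement is a direct consequence of \labelcref{eq:iso eqn}''. What is true, and what the rest of the paper actually uses, is only that $G^{(0)}_{\eff}$ is closed and invariant (so that $G_{\eff}$ is again a locally compact Hausdorff \'etale groupoid) and that every unit of $G^{(0)}_{\eff}$ is effective \emph{as a unit of $G$}; effectiveness of the reduction itself is never invoked. (For an \emph{open} invariant $V\subseteq G^{(0)}_{\eff}$ the reduction $G|_V$ \emph{is} effective, since then $\I^\circ(G|_V)=\I^\circ(G)\cap G|_V$; that is the version used in \cref{rmk:effective->jointly effective}.) So your proof is complete and correct for every assertion of the lemma except the one that is actually wrong, and you should record the counterexample rather than keep hunting for the missing argument.
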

\begin{proof}
  Suppose that $G$ is not effective at $x \in G^{(0)}$.
  Then $x$ has nontrivial isotropy and any $\gamma\in \I^\circ(G)_x\setminus \{x\}$ is contained in an open bisection
  $B$ in $\I^\circ(G) \setminus G^{(0)}$.
  Therefore, $s(B)$ is an open neighbourhood of $x$ consisting of points that are not effective,
  so $G^{(0)}\setminus G^{(0)}_{\eff}$ is open and contained in $s(\I^\circ(G)\setminus G^{(0)})$.
  On the other hand, if $\gamma\in \I^\circ(G) \setminus G^{(0)}$, then there is an open bisection $B$ in $\I^\circ(G)
  \setminus G^{(0)}$ containing $\gamma$.
  If $x = s(\gamma)$, this means that $\I^\circ(G)_x \neq \{x\}$, so $G$ is not effective at $x$.

  In order to see invariance, let $x\in s(\I^\circ(G) \setminus G^{(0)})$ and take $\gamma\in G$ with $x = s(\gamma)$
  and $r(\gamma) = z \neq x$.
  We will show that $z$ is not effective.
  Let $\eta\in \I^\circ(G) \setminus G^{(0)}$ with $s(\eta) = x = r(\eta)$.
  Choose an open bisection $B_\gamma$ in $G\setminus G^{(0)}$ containing $\gamma$ and an open bisection $B_\eta$ in
  $\I(G)^\circ\setminus G^{(0)}$ containing $\eta$.
  Then $B_\gamma B_\eta B_\gamma^{-1}$ is an open bisection containing $\gamma \eta \gamma^{-1}$ (which is isotropy
  over $z$),
  and it consists only of isotropy elements, because $B_\eta$ consists only of isotropy elements.
  Therefore, $B_\gamma B_\eta B_\gamma^{-1} \subset \I(G)^{\circ} \setminus G^{(0)}$ and $z\in s(B_\gamma B_\eta
  B_\gamma^{-1})$,
  so $z$ is not effective.

  The final statement is a direct consequence of~\labelcref{eq:iso eqn}.
\end{proof}

The obstruction ideal defined below will play a central role in~\cref{thm:obstruction-ideal}.

\begin{definition}
Let $G$ be a locally compact Hausdorff \'etale groupoid.
The set of all units that are not effective is an open and invariant subset of $G^{(0)}$,
so it determines a dynamical ideal $I_{G^{(0)}\setminus G^{(0)}_{\eff}}$ of $C^*_r(G)$.
We call this the \emph{obstruction ideal} and denote it by $J^{\ob}$.
This terminology is explained in~\cref{rem:diagonal-uniqueness-theorem}.
\end{definition}

We let $G_{\eff}$ denote the reduction of $G$ to the closed invariant subset
of effective points. The unit space of $G_{\eff}$ then coincides with
$G^{(0)}_{\eff}$.

We require a groupoid analogue of the notion of strong topological freeness
introduced in~\cite[Section 7]{Ara-Lolk2018}.

\begin{definition}
  A locally compact Hausdorff groupoid $G$ is \emph{jointly effective} at a unit $x\in G^{(0)}$
  if for any finite collection of nontrivial isotropy elements $\gamma_1,\dots,\gamma_n\in \I(G)_x\setminus\{x\}$
  and any open bisections $B_1,\dots,B_n$ in $G\setminus G^{(0)}$ such that $\gamma_i \in B_i$
  there exists $y\in \bigcap_{i=1}^n s(B_i)$ such that $r(B_i y) \neq y$ for all $i=1,\dots,n$.
\end{definition}

\begin{remark}\label{rmk:effective->jointly effective}
If $G$ is effective, then it is jointly effective at every unit.
More generally, any unit in an open set of effective points is jointly effective.

For the first assertion, first suppose that $G$ is effective, and fix $x \in
G^{(0)}$ and $\gamma_1, \dots, \gamma_n \in \I(G)_x \setminus \{x\}$. Fix
open bisections $B_i$ in $G \setminus G^{(0)}$ containing $\gamma_i$. By
shrinking if necessary, we can assume that $W \coloneqq s(B_i) = s(B_j)$ for
all $i,j$. Since $G$ is effective, each $B_i \cap \I(G)$ has empty interior.
So for each $i$, the set $W_i \coloneqq s(B_i \setminus \I(G))$ is open and
dense in $W$. Hence $\bigcap_i W_i$ is open and dense, and in particular
nonempty. Now any $y \in \bigcap_i W_i$ satisfies $r(B_i y) \not= y$ for all
$i$.

For the second assertion, suppose only that $U$ is an open subset of
$G^{(0)}$ contained in $G^{(0)}_{\eff}$, and fix $x \in U$. Since
$G^{(0)}_{\eff}$ is invariant, $V \coloneqq r(GU)$ is open and invariant with $U
\subseteq V \subseteq G^{(0)}_{\textrm{eff}}$. The first assertion applied to
$G|_V$ shows that $x$ is jointly effective in $G|_V$, and hence in $G$.

It is possible for a groupoid to be effective at a unit but not jointly
effective at that unit---see~\cref{eg:ExelsX}.
\end{remark}

This leads us to an analogue of Ara and Lolk's notion of relative strong topological freeness.

\begin{definition}
Let $G$ be a locally compact Hausdorff \'etale groupoid. We say that $G$ is \emph{jointly effective where it is
effective} if $G$ is jointly effective at every point in $G^{(0)}_{\eff}$.
\end{definition}

\begin{examples}
\begin{enumerate}
\item By~\cref{rmk:effective->jointly effective}, if $G$ is effective then it
    is jointly effective where it is effective. In particular, if $G$ is
    principal, then it is jointly effective where it is effective.
  \item Suppose $G$ is a Hausdorff \'etale group bundle (for example, $G$ is a nontrivial discrete group). 
    Since $G^{(0)}$ is clopen, $G$ is effective at $x \in G^{(0)}$ if and only if $G_x = \{x\}$. 
    Since $G$ is trivially effective at $x$ when $G^x_x = \{x\}$, it follows that $G$ is jointly effective where it is effective. 
    We have $G^{(0)}_{\eff} = \{x : G_x = \{x\}\}$, and the obstruction ideal is generated by $C_0(\{x : G_x \not= \{x\}\})$.
\item In particular, Willett's groupoid \cite{Willett} consists entirely of
    isotropy, and hence is jointly effective where it is effective. It is
    not inner-exact. The obstruction ideal is the whole reduced groupoid
    $C^*$-algebra.
\end{enumerate}
\end{examples}

The next examples show that groupoids need not be jointly effective where
they are effective and that the property of being jointly effective where
effective does not necessarily pass to reductions to closed invariant
subsets. This latter permanence property does hold in groupoids all of whose
nontrivial isotropy groups are infinite cyclic (see~\cref{sec:DR-groupoids}).

\begin{example}[Exel's cross]\label{eg:ExelsX}
  Let $X =  \big([-1,1]\times \{0\}\big) \cup \big(\{0\}\times [-1,1]\big)$
  and consider the two homeomorphisms $\varphi$ and $\psi$ on $X$ given by
  $\varphi(x,y) = (-x,y)$ and $\psi(x,y) = (x,-y)$ for all $(x,y)\in X$.
  These commuting order-two homeomorphisms define an action
  $\varphi\oplus \psi\colon \Z/2\oplus \Z/2\Z \curvearrowright X$. Let
  $G_{\varphi\oplus \psi}$ be the transformation groupoid $X \rtimes
  (\Z/2\Z)^2$. To keep notation from getting too confusing, we regard
  $(\Z/2\Z)^2$ as the abelian group with four elements $\{e, a, b, ab\}$ (so
  the group operation is written multiplicatively), so that $a = (1,0)$
  and $b = (0,1)$ are the order-two generators.

  In this example, the interior of the isotropy $\I^\circ(G_{\varphi\oplus \psi})$ is
  \[
    \big(X \times \{e\}\big) \cup \big((([-1,0)\cup(0,1]) \times \{0\}) \times \{b\}\big) \cup \big((\{0\} \times
    ([-1,0)\cup(0,1])) \times \{a\}\big),
  \]
  and the only effective unit is $(0,0) \in X$.

  Every point in $X$ has nontrivial isotropy (so $G_{\varphi\oplus \psi}$ is not effective).
  More specifically, the isotropy group of every point that is not the origin is isomorphic to $\Z/2\Z$ while the
  isotropy group at the origin is isomorphic to $\Z/2\Z \oplus \Z/2\Z$.
  The origin is the only point that is effective, but it is not jointly effective.
  Therefore, $G_{\varphi\oplus \psi}$ is not jointly effective where it is effective.
\end{example}

Ara and Lolk~\cite[Section 7]{Ara-Lolk2018} exhibit an example of a partial action
that shows that their relative strong topological freeness is not automatic,
and their example can be adapted to our groupoid setting.

\begin{example}
We can extend Exel's cross to see that being jointly effective where
effective does not pass to closed invariant subgroupoids. To see this, let
$X$ be as in Exel's cross, and let $Y = X \times [-1,1]$.

Extend $\varphi$ and $\psi$ to homeomorphisms $\tilde\varphi$ and $\tilde\psi$
on $Y$ by $\tilde{\varphi}(x, t) = (\varphi(x),-t)$ and similarly
$\tilde{\psi}(x,t) = (\psi(x), -t)$. Again we regard these as determining an
action of $\Z_2 \oplus \Z_2 = \{e, a, b, ab\}$ on $Y$.

Neither $a$ nor $b$ fixes any point in $Y \setminus X$ because both invert
the $t$-coordinate. Since the only point in $X$ fixed by $ab$ is the point
$(0,0) \in X$, the only points in $Y$ fixed by $ab$ are those of the form
$((0,0), t)$. So $G_{\tilde{\varphi}, \tilde{\psi}} \coloneqq Y \rtimes
(\Z/2\Z)^2$ is effective, and in particular jointly effective where it is
effective. However, its reduction to the closed invariant set $X$ is Exel's
cross, which is not jointly effective where it is effective.
\end{example}

The next lemma is an easy adaptation of~\cite[Lemma 29.4]{Exel2017} from
partial actions of groups to groupoids, so we give just a fairly succinct
proof.

\begin{lemma}\label{lem:Exel's_lemma}
  Let $G$ be a Hausdorff \'etale groupoid, let $x\in G^{(0)}$ be a unit, and let $B$ be an open bisection
  such that $B \cap \I(G)_x = \varnothing$. Let $f\in C_c(G)$ be such that $f$ has support in $B$.
  Given $\varepsilon > 0$ there exists $h\in C_0(G^{(0)})$ satisfying $0\leq h\leq 1$, $h$ is constantly $1$ on a neighbourhood of $x$,
  and $\| h f h \| < \varepsilon$.
\end{lemma}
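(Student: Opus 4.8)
The plan is to exploit the fact that $B \cap \I(G)_x = \varnothing$ to push the support of $f$ away from any neighbourhood of $x$ under multiplication by a diagonal function on both sides. The key observation is the following: since $B$ is an open bisection and $x \notin \I(G)_x \cap B$, either $x \notin s(B)$, or $x \notin r(B)$, or $x \in s(B) \cap r(B)$ but the unique element $\gamma_x \in B$ with $s(\gamma_x) = x$ has $r(\gamma_x) \neq x$. In every case I claim there is an open neighbourhood $W$ of $x$ in $G^{(0)}$ with $s(B) \cap W$ and $r(B) \cap W$ disjoint (and possibly empty); indeed, in the first two cases one can take $W$ to miss $s(B)$ or $r(B)$ entirely, and in the third case, since $G^{(0)}$ is Hausdorff and $r \circ (s|_B)^{-1}$ is continuous on $s(B)$ sending $x \mapsto r(\gamma_x) \neq x$, one can choose $W$ small enough that $r\big((s|_B)^{-1}(s(B) \cap W)\big)$ is disjoint from $W$, and then shrink $W$ further so that also $W \cap r(B) \subseteq W \cap r(B)$ is disjoint from $s(B) \cap W$ by symmetry.

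With such a $W$ in hand, I would pick $h \in C_0(G^{(0)})$ with $0 \le h \le 1$, with $h$ constantly $1$ on a smaller neighbourhood $W'$ of $x$ with $\overline{W'} \subseteq W$, and with $\supp(h) \subseteq W$. The point is then that $hfh = 0$ outright: for any $\gamma \in G$, $(hfh)(\gamma) = h(r(\gamma)) f(\gamma) h(s(\gamma))$ (using that $h$ is supported on the unit space, so convolution by $h$ is just pointwise multiplication by $h \circ r$ or $h \circ s$), and this is nonzero only if $\gamma \in \supp(f) \subseteq B$, $r(\gamma) \in W$, and $s(\gamma) \in W$ — but $r(B) \cap W$ and $s(B) \cap W$ being disjoint forces no such $\gamma$ to exist. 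Hence $\|hfh\| = 0 < \varepsilon$, which is even stronger than required.

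The one point needing care — and the main (mild) obstacle — is the construction of the separating neighbourhood $W$ in the case $x \in s(B) \cap r(B)$; here one must use Hausdorffness of $G^{(0)}$ together with continuity of the partial homeomorphism $\theta_B = s \circ (r|_B)^{-1}$ (equivalently its inverse) to separate $x = r(\gamma_x)$... wait, rather to separate the point $r(\gamma_x) \neq x$ from $x$, pull that separation back through $\theta_B^{-1}$, and intersect with a symmetric choice obtained by running the same argument for $B^{-1}$. Since $B \cap \I(G)_x = \varnothing$ guarantees $r(\gamma_x) \neq s(\gamma_x) = x$ precisely when $\gamma_x$ exists, this is exactly where the hypothesis is used. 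Everything else is routine: the existence of $h$ is Urysohn's lemma for the locally compact Hausdorff space $G^{(0)}$, and the identity $(hfh)(\gamma) = h(r(\gamma))\,f(\gamma)\,h(s(\gamma))$ is immediate from the convolution formula since $h$ is supported on $G^{(0)}$.
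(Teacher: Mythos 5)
Your overall strategy---cutting $f$ down by a diagonal bump function concentrated near $x$---is the same as the paper's, and your treatment of the case $x \in s(B)$ is essentially the paper's second case. But there is a genuine gap in the cases $x \notin s(B)$ and $x \notin r(B)$: you claim one can choose a neighbourhood $W$ of $x$ that misses $s(B)$ (or $r(B)$) entirely. This fails in general, because $s(B)$ is open but typically not closed, so $x$ may lie in $\overline{s(B)} \setminus s(B)$, in which case every neighbourhood of $x$ meets $s(B)$. This is precisely why the lemma is stated with an $\varepsilon$ rather than asserting $hfh = 0$; your conclusion that $\|hfh\| = 0$ always, ``even stronger than required,'' should have been a warning sign. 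The paper handles this case by separating $x$ from the compact set $s(\{\gamma : |f(\gamma)| \ge \varepsilon\}) \subseteq s(B)$, which does not contain $x$, and then using that a function supported on a bisection has reduced $C^*$-norm equal to its supremum norm to conclude $\|hfh\| = \|hfh\|_\infty < \varepsilon$. Without that norm identity (or some substitute) you cannot convert a pointwise estimate into a $C^*$-norm estimate, so this step is not merely routine.

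A secondary problem: the condition you invoke in your final verification, that $s(B) \cap W$ and $r(B) \cap W$ are disjoint, does not imply $hfh = 0$. What is needed is that no single $\gamma \in B$ has both $s(\gamma) \in W$ and $r(\gamma) \in W$, i.e.\ that $W \cap r\big(B(W \cap s(B))\big) = \varnothing$. For a counterexample to your implication, take $B = \{\gamma\}$ a single arrow from $a$ to $b \neq a$ in a discrete groupoid and $W \supseteq \{a,b\}$: then $s(B) \cap W = \{a\}$ and $r(B) \cap W = \{b\}$ are disjoint, yet $h(r(\gamma))\,f(\gamma)\,h(s(\gamma)) \neq 0$. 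Your actual construction in the case $x \in s(B) \cap r(B)$---pulling back a Hausdorff separation of $x$ from $r(\gamma_x)$ through the continuous map $r \circ (s|_B)^{-1}$---does produce a $W$ satisfying the stronger condition, so that case is repairable by citing the property you arranged rather than the one you stated; this is exactly the paper's choice of $V_1 \ni x$ with $r(BV_1) \cap V_1 = \varnothing$. The first gap, however, requires the compactness-plus-sup-norm argument and cannot be patched within your scheme.
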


\begin{proof}
First suppose that $x \not\in s(B)$. By Urysohn's lemma we can find $h \in
C_0(G^{(0)}, [0,1])$ that is $1$ on a neighbourhood of $x$ and vanishes on
$\{s(\gamma) : |f(\gamma)| \ge \varepsilon\} \subseteq s(B)$. Since $f$ is
supported on a bisection, its $C^*$-norm agrees with its supremum norm \cite[Corollary 9.3.4]{SimsCRM},
and hence $\|hfh\|_{C^*(G)} = \|hfh\|_\infty < \varepsilon$.

Now suppose that $x \in s(B)$. Let $\gamma$ be the unique element of $B$ with
$s(\gamma) = x$. By assumption, $r(\gamma) \not= x$ so we can choose an open
set $V_1$ containing $x$ such that $r(V_1) \cap V_1 = \varnothing$. By
Urysohn's lemma, there exists $h \in C_0(G^{(0)}, [0,1])$ such that $h = 1$
on a neighbourhood of $x$ and $h$ vanishes off $V_1$. In particular,
$\supp(h) \cap r(B\supp(h)) = \varnothing$, and so $hfh = 0$.
\end{proof}

The next two results say that when an inner-exact groupoid $G$ whose full and
reduced $C^*$-algebras coincide is jointly effective where it is effective,
its obstruction ideal $J^{\ob}$ is the minimal dynamical ideal that contains
all purely non-dynamical ideals of $C^*_r(G)$. The proof of the first result
closely follows that of~\cite[Theorem 7.12]{Ara-Lolk2018} (which does not
require the weak containment property) with only minor modifications.

\begin{remark}
The hypothesis below that the sequence $0 \to J^{\ob} \to C^*_r(G) \to C^*_r(G_{\eff}) \to 0$ is exact holds
if, for example, $G$ is inner-exact (in particular, if it is amenable).
However, it also holds trivially if $G$ is effective, and we invoke it in that situation in~\cref{prp:BCFS extension}.
So we have stated~\cref{thm:obstruction-ideal} accordingly.
\end{remark}

\begin{theorem}\label{thm:obstruction-ideal}
  Let $G$ be a locally compact Hausdorff \'etale groupoid that is jointly effective where it is effective.
  Let $J^{\ob}$ be the obstruction ideal in $C^*_r(G)$ and suppose the sequence $0 \to J^{\ob} \to C^*_r(G) \to C^*_r(G_{\eff}) \to
  0$ is exact. If $I$ is a purely non-dynamical ideal of $C^*_r(G)$
  then $I \subset J^{\ob}$.
\end{theorem}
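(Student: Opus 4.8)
The plan is to argue by contraposition via the conditional expectation. Suppose $I$ is an ideal of $C^*_r(G)$ with $I \cap C_0(G^{(0)}) = \{0\}$ but $I \not\subseteq J^{\ob}$; we seek a contradiction. Write $U = G^{(0)} \setminus G^{(0)}_{\eff}$ for the open invariant set determining $J^{\ob} = I_U$, so that $G_{\eff} = G|_{G^{(0)} \setminus U}$. Since we are given that the sequence $0 \to J^{\ob} \to C^*_r(G) \to C^*_r(G_{\eff}) \to 0$ is exact, the quotient map $\pi_U$ has kernel exactly $J^{\ob}$, so $\pi_U(I)$ is a nonzero ideal of $C^*_r(G_{\eff})$. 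By the sandwiching lemma (\cref{lem:sandwich}) applied inside $C^*_r(G)$ — or more directly by examining supports — the hypothesis $I \not\subseteq J^{\ob}$ means there is $a \in I$ and $\gamma \in \supp(I)$ with $s(\gamma) \in G^{(0)}_{\eff}$; by \cref{lem:support invariant} we may left-multiply by a compactly supported function on a bisection through $\gamma^{-1}$ and arrange $j(a)(s(\gamma)) \ne 0$, i.e. $E(a)(x) \ne 0$ for the effective unit $x := s(\gamma)$. Replacing $a$ by $a^*a$ (still in $I$, and with $E(a^*a) \geq |E(a)|^2$ positive at $x$ by faithfulness and positivity of $E$) we may assume $a \geq 0$ and $E(a)(x) = 1$.

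The heart of the argument is then a perturbation: using that $G$ is jointly effective at the effective unit $x$, together with \cref{lem:Exel's_lemma}, I would produce $h \in C_0(G^{(0)})$ with $0 \leq h \leq 1$, $h$ constantly $1$ on a neighbourhood of $x$, and $\|h(a - E(a))h\|$ as small as we like. This is the step that mirrors \cite[Theorem~7.12]{Ara-Lolk2018}: first approximate $a$ in norm by a finite sum $\sum_{i} f_i$ with each $f_i \in C_c(G)$ supported on an open bisection $B_i$, with $f_0$ supported in $G^{(0)}$ approximating $E(a)$ and the remaining $f_i$ supported in $G \setminus G^{(0)}$. For the terms with $B_i \cap \I(G)_x = \varnothing$, \cref{lem:Exel's_lemma} gives an $h_i$ shrinking $\|h_i f_i h_i\|$; for the finitely many terms whose bisection $B_i$ meets $\I(G)_x$ in a (necessarily unique, nontrivial) isotropy element $\gamma_i$, joint effectiveness of $G$ at $x$ supplies a unit $y$ arbitrarily close to $x$ with $r(B_i y) \ne y$ for all such $i$, which lets us choose the cutoff $h$ supported near $y$ so that $h f_i h = 0$ for those terms as well. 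Taking a common refinement $h$ of the finitely many cutoffs and using $h E(a) h = E(a)$ near $x$ (so $(hE(a)h)(x) = 1$), we get $\|h a h - hE(a)h\| < \varepsilon$ while $hE(a)h$ is a nonzero positive element of $C_0(G^{(0)})$ with value $1$ at $x$.

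Finally I would extract the contradiction. The element $hah$ lies in $I$ (an ideal), is positive, and satisfies $\|hah - c\| < \varepsilon$ where $c := hE(a)h \in C_0(G^{(0)})$, $c \geq 0$, $\|c\| \leq 1$, $c(x) = 1$. Choosing $\varepsilon < 1/2$ and applying continuous functional calculus — specifically, letting $g \colon [0,\infty) \to [0,1]$ be continuous with $g \equiv 0$ on $[0,1/2]$ and $g(1) = 1$ — the element $g(hah)$ lies in $I$, while $g(c)$ lies in $C_0(G^{(0)})$, is nonzero (since $g(c)(x) = g(1) = 1$), and $\|g(hah) - g(c)\|$ can be made small (uniform continuity of $g$ on the relevant bounded interval). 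Then $g(c) = g(hah) + (g(c) - g(hah))$, and standard functional-calculus / approximate-unit arguments (e.g. $g(c)^{1/2} \, g(hah) \, g(c)^{1/2}$ is a positive element of $I \cap C_0(G^{(0)})$ that is close to $g(c)^2$ and hence nonzero at $x$) produce a nonzero element of $I \cap C_0(G^{(0)})$, contradicting $I \cap C_0(G^{(0)}) = \{0\}$. I expect the main obstacle to be the bookkeeping in the perturbation step — organising the finite approximation of $a$ by bisection-supported pieces, handling the two cases (bisection disjoint from $\I(G)_x$ versus meeting it), and combining the finitely many Urysohn cutoffs $h_i$ into a single $h$ that is still $1$ near a suitable unit — together with verifying that joint effectiveness at $x$ (rather than mere effectiveness) is exactly what is needed to kill all the isotropy-meeting terms simultaneously.
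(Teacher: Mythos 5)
Your outline follows the paper's argument (itself modelled on Ara--Lolk) quite closely in its main body: locate an effective unit $x$ at which $E(a^*a)$ is nonzero, approximate by finitely many bisection-supported pieces, apply \cref{lem:Exel's_lemma} to the bisections that miss $\I(G)_x$, and use joint effectiveness at $x$ to find a nearby unit $y$ at which the remaining bisections can also be killed by \cref{lem:Exel's_lemma}. Two small corrections there: the evaluation at the end must happen at $y$, not at $x$ (your parenthetical ``$(hE(a)h)(x)=1$'' should read ``$(hE(a)h)(y)\approx 1$'', obtained from a continuity estimate such as $|E(g)(y)-E(g)(x)|<\varepsilon/4$, which is exactly how the paper handles it); and your appeal to \cref{lem:sandwich} to locate the effective point presupposes inner-exactness, which is not a hypothesis here --- the intended route is that exactness of the given sequence makes $\pi(a^*a)$ a nonzero element of $C^*_r(G_{\eff})$, and faithfulness of the conditional expectation on $C^*_r(G_{\eff})$ then forces $E(a^*a)$ to be nonzero somewhere on $G^{(0)}_{\eff}$.

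The genuine gap is in your final step. The element $g(c)^{1/2}\,g(hah)\,g(c)^{1/2}$ lies in $I$ but \emph{not} in $C_0(G^{(0)})$: compressing an element of the ideal by diagonal elements keeps it in the ideal, it does not push it into the diagonal. So this element cannot witness $I\cap C_0(G^{(0)})\neq\{0\}$, and the functional-calculus ending as written does not close. The paper's fix is one line and needs no functional calculus: since $I\cap C_0(G^{(0)})=\{0\}$, the quotient map $q\colon C^*_r(G)\to C^*_r(G)/I$ restricts to an injective, hence isometric, $^*$-homomorphism on $C_0(G^{(0)})$; as $hah\in I$ this gives $\|hE(a)h\|=\|q(hE(a)h)\|=\|q(hE(a)h-hah)\|\leq\|hE(a)h-hah\|<\varepsilon$, contradicting $\|hE(a)h\|\geq|(hE(a)h)(y)|\approx 1$. (If you prefer to keep your formulation of manufacturing an element of $I\cap C_0(G^{(0)})$, the correct ``standard argument'' is the Kirchberg--R\o{}rdam lemma: from $\|hah-c\|<\varepsilon$ with both elements positive one gets $(c-\varepsilon)_+ = d\,(hah)\,d^*$ for some $d\in C^*_r(G)$, and $(c-\varepsilon)_+$ genuinely is a nonzero element of $I\cap C_0(G^{(0)})$.) With either repair the proof matches the paper's.
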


\begin{proof}
  We suppose that $I \not\subseteq J^{\ob}$ and derive a contradiction. Fix $a\in
  I \setminus J^{\ob}$.
  In particular, $a^*a\in I\setminus J^{\ob}$.
  Let $E\colon C^*_r(G) \to C_0(G^{(0)})$ be the canonical faithful conditional expectation,
  let $G_{\eff} = G|_{G^{(0)}_{\eff}}$,
  and let $\pi = \pi_{G^{(0)}\setminus G^{(0)}_{\eff}}\colon C^*_r(G) \to C^*_r(G_{\eff})$ denote the
  canonical quotient map. Let $E_{\eff}$ be the canonical faithful conditional expectation associated with $C^*_r(G_{\eff})$.
  Then the diagram
  \[
  \begin{tikzcd}
    C^*_r(G) \arrow[r, "\pi"] \arrow[d, "E"'] & C^*_r(G_{\eff}) \arrow[d, "E_{\eff}"] \\
    C_0(G^{(0)}) \arrow[r, "\pi"'] & C_0(G^{(0)}_{\eff})
  \end{tikzcd}
  \]
  commutes. By hypothesis, $J^{\ob} = \ker(\pi)$, so $\pi(a^*a) \neq 0$ because $a^*a\notin J^{\ob}$.
  Since $E_{\eff}$ is faithful, $E_{\eff}(\pi(a^*a)) \neq 0$. Hence $0 \neq E_{\eff}(\pi(a^*a))
  = \pi(E(a^*a))$
  by commutativity of the diagram.
  This means that $f\coloneqq E(a^*a)\in C_0(G^{(0)})$ is nonzero on $G^{(0)}_{\eff}$.
  Choose $x_0\in G^{(0)}_{\eff}$ such that
  \begin{equation}
    |f(x_0)| = \sup_{x\in G^{(0)}_{\eff}} |f(x)|,
  \end{equation}
  and let $0 < \varepsilon < \frac{|f(x_0)|}{2}$.
  The set $V = \{ x\in G^{(0)} : |f(x)| < |f(x_0)| + \varepsilon/4 \}$ is open in $G^{(0)}$ and contains $x_0$.
  By Urysohn's lemma we may pick a function $u\in C_0(G^{(0)})$ such that $0\leq u \leq 1$, $u(x_0) = 1$, and $u$
  vanishes outside $V$.
  Set $z \coloneqq u a^*a\in I\setminus J^{\ob}$ and observe that $E(z) = u E(a^*a) = u f$, and
  \begin{equation}\label{eq:||E(z)||}
    2 \varepsilon < |f(x_0)| \leq \| E(z) \| \leq |f(x_0)| + \varepsilon/4.
  \end{equation}

  We claim that there exists $h\in C_0(G^{(0)})$ satisfying $0 \leq h \leq 1$, $h(x_1) = 1$ and
  \begin{align}
    \|E(z)\| &< \|h E(z) h\| + \varepsilon; \label{claim:1} \\
    \|h E(z) h - h z h\| &< \varepsilon. \label{claim:2}
  \end{align}
  Since $z\in C^*_r(G)$, there exists $g\in C_c(G)$ such that $\|z - g\| < \varepsilon/4$.
  In particular, $\|E(z) - E(g)\| < \varepsilon/4$.
  Note that $E(g)$ is supported on $G^{(0)}$ and $g - E(g)\in C_c(G\setminus G^{(0)})$.
  Choose open bisections $B_1,\ldots,B_k\subset G \setminus G^{(0)}$ that cover $\supp(g - E(g))$
  and write
  \begin{equation}\label{eq:decomposition}
    g - E(g) = \sum_{i = 1}^k g_i
  \end{equation}
  with $g_i\in C_0(B_i)$ for each $i$.

  For each $i$ such that $B_i \cap \I(G)_{x_0} = \varnothing$,
  we can apply~\cref{lem:Exel's_lemma} to obtain a function $h_i\in C_0(G^{(0)}, [0,1])$ that is identically~$1$ on an open neighbourhood $U_i$ of $x_0$
  and satisfies $\|h_i g_i h_i\| \le \varepsilon/2k$.
  Consider the open neighbourhood $U \coloneqq \{x \in G^{(0)} : |E(g)(x) - E(g)(x_0)|  < \varepsilon/4\}$ of $x_0$.
  Since $G$ is jointly effective at $x_0$ by hypothesis,
  there exists a unit $x_1 \in U \cap \bigcap_{B_i \cap \I(G)_{x_0} = \varnothing} U_i$
  such that for each $i$ satisfying $B_i \cap \I(G)_{x_0} \not= \varnothing$, we have $r(B_i x_1) \not=  x_1$.
  Since $x_1 \in U$, we have
  \begin{equation}\label{eq:x_1}
    |E(g)(x_1) - E(g)(x_0)| < \varepsilon/4,
  \end{equation}
  and for each $i$ such that $B_i \cap \I(G)_{x_0} = \varnothing$, since $x_1 \in U_i$ we have $h_i(x_1) = 1$.

  For each $i$ such that $B_i \cap \I(G)_{x_0} \not= \varnothing$, \cref{lem:Exel's_lemma} for $B_i$
  at $x_1$ yields a function $h_i \in C_0(G^{(0)}, [0,1])$ satisfying
  \begin{equation}\label{eq:hgh}
    h_i(x_1) = 1 \qquad\text{and}\qquad \|h_i g_i h_i \| < \frac{\varepsilon}{2 k}.
  \end{equation}
  Altogether we have constructed functions $h_1, \dots, h_k$ that all satisfy~\labelcref{eq:hgh}.
  Set $h \coloneqq \prod^k_{i=1} h_i \in C_0(G^{(0)})$ and note that $0\leq h \leq 1$ and $h(x_1) = 1$.

  It remains to verify~\labelcref{claim:1,claim:2}; we do this by direct computation.
  Using~\labelcref{eq:||E(z)||} and the fact that $u(x_0) = 1$, we see that
  \[
    \| E(z) \| - \varepsilon \leq |f(x_0)| - 3\varepsilon/4 = |E(z)(x_0)| - 3\varepsilon/4.
  \]
  By first using the choice of $g$ and then the choice of $x_1$ from~\labelcref{eq:x_1}, we find
  \[
    |E(z)(x_0)| - 3\varepsilon/4 < |E(g)(x_0)| - \varepsilon/2 < |E(g)(x_1)| - \varepsilon/4.
  \]
  Remembering that $h(x_1) = 1$, we obtain
  \[
    |E(g)(x_1)| - \varepsilon/4 = |(h E(g) h)(x_1)| -\varepsilon/4 \leq \|h E(g) h\| - \varepsilon/4 < \|h E(z) h\|.
  \]
  This means that $\| E(z) \| - \varepsilon < \| h E(z) h \| $ so~\labelcref{claim:1} follows.
  For~\labelcref{claim:2}, we use the decomposition~\eqref{eq:decomposition} and then~\eqref{eq:hgh} to see that
  \[
    \| h g h - h E(g) h \| = \Big\| \sum_{i = 1}^k h g_i h \Big\| \leq \sum_{i = 1}^k \|h g_i h\| < \varepsilon/2.
  \]
  Hence
  \[
    \|h z h - h E(z) h \| \leq \|hzh - hgh\| + \|hgh - hE(g)h\| + \|hE(g)h - hE(z)h\| < \varepsilon,
  \]
  and this proves~\labelcref{claim:2}.

  To complete the proof, consider the canonical quotient map $q\colon C^*_r(G) \to C^*_r(G)/ I$ which is injective on the diagonal, since
  $I\cap C_0(G^{(0)}) = \{0\}$ by hypothesis.
  Since $z\in I$ we have $q(h E(z) h) = q(h E(z) h - h z h)$ so
  \[
    \| h E(z) h \| = \|q(h E(z) h) \| = \| q(h E(z) h - h z h)\| \leq \| h E(z) h - h z h\|.
  \]
  Applying~\labelcref{claim:1}, the above inequality, and then~\labelcref{claim:2}, we
  obtain
  \[
    \|E(z)\| < \|h E(z) h - h z h\| + \varepsilon < 2\varepsilon,
  \]
  This contradicts the estimate $2 \varepsilon < \|E(z)\|$ from~\labelcref{eq:||E(z)||}.
  Hence $I \subset J^{\ob}$.
\end{proof}

The lemma below uses the full groupoid $C^*$-algebra $C^*(G)$. We refer the
reader to~\cite{WilliamsGroupoidsBook} for a discussion of this $C^*$-algebra
that does not assume second-countability.

\begin{lemma} \label{lem:BCFS}
  Let $G$ be a locally compact Hausdorff \'etale groupoid whose full and reduced $C^*$-algebras coincide.
  Let $J^{\ob}$ be the obstruction ideal in $C^*_r(G)$.
  There is a $^*$-representation $\varepsilon$ of the full groupoid $C^*$-algebra $C^*(G)$ such
  that $\ker(\varepsilon)$ is purely non-dynamical and such that $\supp(J^{\ob}) \subset \supp(\ker(\varepsilon))$.
\end{lemma}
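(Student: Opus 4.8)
The goal is to produce a representation $\pi$ of $C^*(G)$ whose kernel is purely non-dynamical and whose support exhausts that of the obstruction ideal $J^{\ob}$. The natural candidate is the regular representation associated to the set of units that are \emph{not} effective, i.e. the disintegrated form of $\bigoplus_{x \notin G^{(0)}_{\eff}} \pi_x$, or equivalently the composition of the quotient map $C^*(G) \to C^*(G|_{G^{(0)} \setminus G^{(0)}_{\eff}})$ (which exists since $G^{(0)} \setminus G^{(0)}_{\eff}$ is closed invariant by~\cref{lem:effective-points}) with a faithful-enough representation of the latter algebra. More precisely, I would let $\pi$ be the representation of $C^*(G)$ obtained by composing the canonical surjection $C^*(G) \to C^*_r(G|_{W})$, where $W = G^{(0)} \setminus G^{(0)}_{\eff}$, using here the hypothesis that the full and reduced $C^*$-algebras of $G$ coincide so that $C^*(G) = C^*_r(G)$ and this map is just $\pi_{G^{(0)}_{\eff}}$ followed by inclusion/identification—wait, that gives the quotient by $I_W$, which is dynamical. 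So instead I must go the other way: take $\pi$ to be (the integrated form of) the direct sum of regular representations $\pi_x$ over $x \in W$, viewed as a representation of the \emph{full} algebra $C^*(G)$; its kernel is the ideal of elements vanishing on all $G_x$, $x \in W$.

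\textbf{Key steps.} First, since $C^*(G) = C^*_r(G)$ by hypothesis, every element of $C^*(G)$ is a function on $G$ via $j$, and I would set $\pi \coloneqq \bigoplus_{x \in W} \pi_x$ with $W = G^{(0)} \setminus G^{(0)}_{\eff}$, acting on $\bigoplus_{x \in W} \ell^2(G_x)$. Second, compute $\ker(\pi)$: an element $a$ lies in $\ker(\pi)$ iff $\pi_x(a) = 0$ for all $x \in W$, iff $j(a)$ vanishes on $G|_W = s^{-1}(W)$, i.e. $\supp(\ker(\pi)) \subseteq G \setminus G|_W$. Third, and dually, I claim $\ker(\pi)$ is purely non-dynamical: if $f \in \ker(\pi) \cap C_0(G^{(0)})$ then $f$ vanishes on $W$, but $f$ also must vanish on $G^{(0)}_{\eff}$ — here I would invoke that the restriction of $\pi$ to $G_{\eff}$ recovers $C^*_r(G_{\eff})$ on which $E_{\eff}$ is faithful, together with the diagram from the proof of~\cref{thm:obstruction-ideal}: actually the cleanest argument is that $\pi$ restricted to the diagonal is just the quotient map $C_0(G^{(0)}) \to C_0(W)$ composed with... no — $\pi_x$ for $x \in W$ is faithful on $C_0(G^{(0)})$ near $x$. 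Hmm: $\pi_x(f)\delta_x = f(x)\delta_x$, so $\pi(f) = 0$ iff $f(x) = 0$ for all $x \in W$ iff $f|_{\overline{W}} = 0$; but $W$ is open so $\overline{W} \supseteq W$, and $f$ need only vanish on $W$, not all of $G^{(0)}$. So this $\pi$ does \emph{not} have purely non-dynamical kernel. I therefore must be more careful.

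\textbf{The correct candidate and main obstacle.} The right object is the representation coming from Ara--Lolk's construction: use that $J^{\ob} = I_W$ and that by inner-exactness (which is available here since weak containment plus... actually inner-exactness is a separate hypothesis in~\cref{thm:obstruction-ideal}, but~\cref{lem:BCFS} only assumes weak containment) we have $C^*_r(G)/J^{\ob} \cong C^*_r(G_{\eff})$. The representation $\pi$ should be a faithful representation of $C^*(G)$ that is "large" on $J^{\ob}$: concretely, since $C^*(G) = C^*_r(G)$, take $\pi$ to be a faithful representation of $C^*(G)/(J^{\ob} \cap \ker)$... The honest plan: I would take $\pi$ to be the direct sum of the identity representation of $C^*_r(G_{\eff}) = C^*_r(G)/J^{\ob}$ (pulled back along the quotient) with a representation $\rho$ of $C^*(G)$ whose kernel is purely non-dynamical — but the existence of \emph{any} representation with purely non-dynamical nonzero kernel supported throughout $G|_W$ is essentially what~\cref{thm:obstruction-ideal} and the full/reduced coincidence are supposed to deliver. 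I expect the actual proof uses the following: since $C^*(G) = C^*_r(G)$, the trivial-on-$W$ ideal has full support on $G|_W$; one then intersects with something. The main obstacle, which I would flag explicitly, is precisely constructing a single representation that is simultaneously (i) injective on $C_0(G^{(0)})$ — equivalently has purely non-dynamical kernel — and (ii) has kernel supported on all of $G|_W$; the weak containment hypothesis $C^*(G) = C^*_r(G)$ is what should make this possible, by allowing one to take $\pi$ to be (the integrated form of) a regular-type representation over a carefully chosen set of units meeting every orbit in $W$ exactly at its effective points, so that the diagonal survives but every fibre over $W$ is killed; making this choice of units and verifying (ii) via the orbit-invariance of $\supp(\ker\pi)$ from~\cref{lem:support invariant} is the crux.
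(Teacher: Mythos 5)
Your proposal correctly diagnoses why the naive candidate fails, but it stops short of the construction the lemma actually requires, so there is a genuine gap. You consider $\bigoplus_{x\in W}\pi_x$ with $\pi_x$ the regular representation on $\ell^2(G_x)$ and rightly discard it because its kernel contains every $C_0$-function vanishing on $\overline{W}$, hence is not purely non-dynamical; you then end by naming the existence of a single representation that is simultaneously injective on $C_0(G^{(0)})$ and has kernel supported on all of $G|_W$ as ``the crux'' without producing one. The missing idea is to replace the regular representations on $\ell^2(G_x)$ by the representations on the \emph{orbit spaces} $\ell^2([x])$ from the proof of \cite[Proposition~5.2]{Brown-Clark-Farthing-Sims}, namely $\pi_x(f)e_y=\sum_{\gamma\in G_y}f(\gamma)\,e_{r(\gamma)}$, and to take $\pi=\bigoplus_{x\in G^{(0)}}\pi_x$ over \emph{all} units, not just those in $W$. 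Because the sum runs over $G_y$ but lands in basis vectors indexed by units, all isotropy at $y$ is collapsed onto the single vector $e_y$. Consequently, for a non-effective $x$, an interior-isotropy bisection $B\subseteq\I^\circ(G)\setminus G^{(0)}$ with $x\in s(B)$ and $f\in C_c(s(B))$ with $f(x)\neq 0$ yield $f-\tilde f\in\ker(\pi)$, where $\tilde f(\eta)=f(s(\eta))$ on $B$; hence $x\in\supp(\ker(\pi))$, and $\supp(J^{\ob})\subseteq\supp(\ker(\pi))$ follows from invariance of supports (\cref{lem:support invariant}). At the same time $\bigl(\pi_x(f)e_x\mid e_x\bigr)=f(x)$ for $f\in C_0(G^{(0)})$, so $\pi$ is injective on the diagonal and $\ker(\pi)$ is purely non-dynamical.

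Two further points. First, these $\pi_x$ are only $I$-norm bounded, so a priori they are representations of the \emph{full} $C^*$-algebra; the weak containment hypothesis is what allows one to regard $\ker(\pi)$ as an ideal of $C^*_r(G)$ and to speak of its support via $j$ --- your proposal invokes this hypothesis in several places but never for the reason it is actually needed. Second, your closing suggestion of a regular-type representation over units ``meeting every orbit in $W$ exactly at its effective points'' cannot work as stated: $W=G^{(0)}\setminus G^{(0)}_{\eff}$ is invariant, so an orbit meeting $W$ lies entirely in $W$ and contains no effective points at all.
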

\begin{proof}
  The proof of~\cite[Proposition~5.2]{Brown-Clark-Farthing-Sims}
  shows that for each $x \in G^{(0)}$ there is an $I$-norm bounded $^*$-representation
  $\varepsilon_x$ of $C_c(G)$ on the orbit space $\ell^2([x])$ such that $\varepsilon_x(f)e_y = \sum_{\gamma \in G_y} f(\gamma) e_{r(\gamma)}$.
  By definition of $C^*(G)$, $\varepsilon_x$ extends to a representation of $C^*(G)$.
  Let $\varepsilon = \bigoplus_{x \in G^{(0)}} \varepsilon_x$  (this representation is also
  described on \cite[page~330]{KwasniewskiMeyer2019}). Then $\varepsilon$ is injective on $C_0(G^{(0)})$,
  because for $f \in C_0(G^{(0)})$ and $x \in G^{(0)}$, we have $0 \not= f(x) = \big(\varepsilon_x(f) e_x \mid e_x\big) \le \|\varepsilon(f)\|$.
  Hence $\ker(\varepsilon)$ is a purely non-dynamical ideal.

  To see that $\supp(\ker(\varepsilon))$ contains $\supp(J^{\ob})$,
  by~\cref{lem:support invariant} it suffices to show that $G^{(0)}\setminus G^{(0)}_{\eff} = \supp(J^{\ob} \cap G^{(0)}) \subseteq \supp(\ker(\varepsilon))$.
  Fix $x \in G^{(0)}\setminus G^{(0)}_{\eff}$ and choose $\gamma\in \I^\circ(G) \setminus G^{(0)}$ such that $s(\gamma) = x$.
  Take an open bisection neighbourhood $B \subseteq \I^\circ(G)\setminus G^{(0)}$ of~$\gamma$.
  Choose a nonzero function $f\in C_c(s(B))$ with $f(x) \neq 0$ and let $\tilde{f}\in C_c(B)$ be the function given by $\tilde{f}(\eta) = f(s(\eta))$ for all $\eta\in B$.
  By extending by zero, both functions can be regarded as elements of $C_c(G)$. Direct calculation on basis elements
  (see the proof of~\cite[Proposition~5.5(2)]{Brown-Clark-Farthing-Sims}) shows that $f - \tilde{f} \in \ker(\varepsilon)$.
  So $x \in \supp(\ker(\varepsilon))$.
\end{proof}

\begin{corollary}\label{cor:minimal}
  Let $G$ be a locally compact Hausdorff \'etale groupoid that is jointly effective where it is effective.
  Suppose the sequence $0\to J^{\ob} \to C^*_r(G) \to C^*_r(G_{\eff})\to 0$ is exact
  and that the full and reduced groupoid $C^*$-algebras of $G$ coincide.
  Then there is a purely non-dynamical ideal whose support is equal to that of
  $J^{\ob}$, and $J^{\ob}$ is the minimal dynamical ideal that contains all purely non-dynamical ideals of $C^*_r(G)$.
\end{corollary}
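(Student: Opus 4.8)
The plan is to combine \cref{lem:BCFS} with \cref{thm:obstruction-ideal}. First I would produce the purely non-dynamical ideal with the prescribed support. Since the full and reduced $C^*$-algebras of $G$ coincide, \cref{lem:BCFS} supplies a $^*$-representation $\pi$ of $C^*(G) = C^*_r(G)$ whose kernel $L \coloneqq \ker(\pi)$ satisfies $L \cap C_0(G^{(0)}) = \{0\}$ and $\supp(J^{\ob}) \subseteq \supp(L)$. What is left is the reverse support inclusion. Here I would invoke \cref{thm:obstruction-ideal}: the two standing hypotheses---that $G$ is jointly effective where it is effective, and that the sequence $0 \to J^{\ob} \to C^*_r(G) \to C^*_r(G_{\eff}) \to 0$ is exact---together with $L \cap C_0(G^{(0)}) = \{0\}$ yield $L \subseteq J^{\ob}$. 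Since $\supp(\cdot)$ is manifestly monotone under inclusion of ideals (it is defined by non-vanishing of $j$), this gives $\supp(L) \subseteq \supp(J^{\ob})$, hence $\supp(L) = \supp(J^{\ob})$. Thus $L$ is a purely non-dynamical ideal whose support equals that of $J^{\ob}$.

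For minimality, recall that $J^{\ob}$ is by definition the dynamical ideal $I_{G^{(0)} \setminus G^{(0)}_{\eff}}$, and that \cref{thm:obstruction-ideal} already shows every ideal with trivial intersection with $C_0(G^{(0)})$---in particular every purely non-dynamical ideal, and trivially the zero ideal---is contained in $J^{\ob}$. So $J^{\ob}$ is a dynamical ideal containing all purely non-dynamical ideals, and the only remaining point is that it is the least such. Let $K$ be any dynamical ideal containing every purely non-dynamical ideal. By \cref{prop:dynamical ideals}, $K = I_W$ for a unique open invariant $W \subseteq G^{(0)}$, with $\supp(K) = G|_W$, while $\supp(J^{\ob}) = G|_{G^{(0)} \setminus G^{(0)}_{\eff}}$. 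Since $K$ contains the ideal $L$ from the first paragraph, we get $G|_{G^{(0)} \setminus G^{(0)}_{\eff}} = \supp(L) \subseteq \supp(K) = G|_W$; intersecting with $G^{(0)}$ gives $G^{(0)} \setminus G^{(0)}_{\eff} \subseteq W$, and feeding this into the order-preserving bijection $U \mapsto I_U$ of \cref{prop:dynamical ideals} gives $J^{\ob} \subseteq I_W = K$. Hence $J^{\ob}$ is minimal.

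The one genuinely substantive step is the reverse inclusion $\supp(L) \subseteq \supp(J^{\ob})$ in the first paragraph: this is exactly where the hypotheses beyond those of \cref{lem:BCFS} are used, via \cref{thm:obstruction-ideal}, to upgrade the one-sided support containment of \cref{lem:BCFS} to an equality. Everything else is bookkeeping with the lattice isomorphism of \cref{prop:dynamical ideals} and the evident monotonicity of $\supp(\cdot)$ under inclusion of ideals.
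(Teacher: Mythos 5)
Your proposal is correct and follows essentially the same route as the paper: \cref{lem:BCFS} gives the one-sided support containment, \cref{thm:obstruction-ideal} upgrades it to equality, and minimality follows by comparing supports of dynamical ideals via the lattice isomorphism of \cref{prop:dynamical ideals}. The only difference is cosmetic: you spell out the bookkeeping (identifying $K = I_W$ and intersecting supports with $G^{(0)}$) slightly more explicitly than the paper does.
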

\begin{proof}
\Cref{lem:BCFS} gives a purely non-dynamical ideal $I$ such that
$\supp(J^{\ob}) \subseteq \supp(I)$. \Cref{thm:obstruction-ideal} shows that
$J^{\ob}$ contains all purely non-dynamical ideals in $C^*_r(G)$, and in
particular contains $I$. Hence $\supp(I) \subseteq \supp(J^{\ob})$, and we
obtain equality. Now suppose that $I_U$ is a dynamical ideal that contains
every purely non-dynamical ideal. Then in particular, $I \subseteq I_U$.
Hence $(G^{(0)} \setminus G^{(0)}_{\eff}) \subseteq \supp(J^{\ob}) = \supp(U)
\subseteq \supp(I_U) = G_U$. Thus \cref{prop:dynamical ideals} implies that
$J^{\ob} \subseteq I_U$.
\end{proof}

To finish the section, we observe that our results can be used to recover
\cite[Proposition~5.5(2)]{Brown-Clark-Farthing-Sims}, without the assumption
that $G$ is second-countable. This is not new. For example, it can be
recovered from a special case of \cite[Theorem~7.29]{KwasniewskiMeyer2019}.
We include it here only to illustrate how our results relate to effective
groupoids.

\begin{proposition}\label{prp:BCFS extension}
Let $G$ be a locally compact Hausdorff \'etale groupoid.
\begin{enumerate}[label=(\arabic*)]
  \item \label{i:if_effective} If $G$ is effective, then every nontrivial ideal of $C^*_r(G)$
    contains a nonzero element of $C_0(G^{(0)})$.
  \item \label{i:then_effective} If every nontrivial ideal of the full $C^*$-algebra $C^*(G)$ contains a
    nonzero element of $C_0(G^{(0)})$, then the full and reduced $C^*$-algebras of $G$ coincide and $G$ is effective.
\end{enumerate}
\end{proposition}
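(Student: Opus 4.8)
The plan is to deduce both assertions from \cref{thm:obstruction-ideal} and \cref{lem:BCFS}, after checking that the hypotheses of those results hold in the effective setting. For~\cref{i:if_effective}, I would begin by unwinding what effectiveness says about the obstruction ideal: by the last assertion of \cref{lem:effective-points}, if $G$ is effective then $G^{(0)}_{\eff} = G^{(0)}$, so $G^{(0)} \setminus G^{(0)}_{\eff} = \varnothing$ and hence $J^{\ob} = I_\varnothing = \{0\}$, while $G_{\eff} = G$ and the sequence $0 \to J^{\ob} \to C^*_r(G) \to C^*_r(G_{\eff}) \to 0$ becomes $0 \to 0 \to C^*_r(G) \to C^*_r(G) \to 0$, which is trivially exact. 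Moreover, an effective groupoid is jointly effective at every unit by \cref{rmk:effective->jointly effective}, hence jointly effective where it is effective. So \cref{thm:obstruction-ideal} applies and shows that any ideal $I$ of $C^*_r(G)$ with $I \cap C_0(G^{(0)}) = \{0\}$ satisfies $I \subseteq J^{\ob} = \{0\}$; equivalently, every nonzero ideal of $C^*_r(G)$ contains a nonzero element of $C_0(G^{(0)})$, which is~\cref{i:if_effective}.

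For~\cref{i:then_effective}, the first step is to establish the coincidence $C^*(G) = C^*_r(G)$, since that is a standing hypothesis of \cref{lem:BCFS}. The kernel of the regular representation $\lambda\colon C^*(G) \to C^*_r(G)$ is an ideal of $C^*(G)$ whose intersection with $C_0(G^{(0)})$ is zero, because $\lambda$ restricts to the canonical embedding of $C_0(G^{(0)})$ into $C^*_r(G)$; the hypothesis then forces $\ker\lambda = \{0\}$, so $\lambda$ is an isomorphism. With $C^*(G) = C^*_r(G)$ in hand, \cref{lem:BCFS} furnishes a $^*$-representation $\pi$ of $C^*(G)$ whose kernel is purely non-dynamical and satisfies $\supp(J^{\ob}) \subseteq \supp(\ker\pi)$. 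Now suppose, for contradiction, that $G$ is not effective. Then $G^{(0)} \setminus G^{(0)}_{\eff}$ is a nonempty open invariant set, so by \cref{prop:dynamical ideals} the obstruction ideal $J^{\ob} = I_{G^{(0)} \setminus G^{(0)}_{\eff}}$ has support $G|_{G^{(0)} \setminus G^{(0)}_{\eff}} \ne \varnothing$; hence $\supp(\ker\pi) \ne \varnothing$, so $\ker\pi$ is a nonzero ideal of $C^*(G)$ with $\ker\pi \cap C_0(G^{(0)}) = \{0\}$, contradicting the hypothesis. Therefore $G$ is effective.

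I do not anticipate a genuine obstacle: once \cref{thm:obstruction-ideal} and \cref{lem:BCFS} are available, both parts are short. The only points needing care are bookkeeping ones: verifying in~\cref{i:if_effective} that the exactness hypothesis of \cref{thm:obstruction-ideal} holds (which is why that theorem was phrased to accommodate the trivially exact case rather than to assume inner-exactness), and carrying out the steps of~\cref{i:then_effective} in the right order, deducing $C^*(G) = C^*_r(G)$ before invoking \cref{lem:BCFS}.
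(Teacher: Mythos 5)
Your proposal is correct and follows essentially the same route as the paper: part (1) via \cref{rmk:effective->jointly effective}, the triviality of $J^{\ob}$, and \cref{thm:obstruction-ideal}; part (2) via the kernel of the regular representation and \cref{lem:BCFS}. The only cosmetic difference is that the paper phrases part (2) as a contrapositive split into two cases, whereas you argue directly by first deducing $C^*(G) = C^*_r(G)$ and then obtaining a contradiction from non-effectiveness; the underlying facts used are identical.
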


\begin{proof}
\labelcref{i:if_effective} Fix an ideal $I$ of $C^*_r(G)$ that contains no
nonzero element of $C_0(G^{(0)})$; we must show that $I = \{0\}$. Since $G$
is effective, it is jointly effective where it is effective, and $J^{\ob}$ is
trivial. The sequence $0 \to J^{\ob} \to C^*_r(G) \to C^*_r(G_{\eff}) \to 0$
is then trivially exact, and~\cref{thm:obstruction-ideal} implies that $I
\subseteq J^{\ob} = \{0\}$.

\labelcref{i:then_effective} We prove the contrapositive. First suppose that
the full and reduced $C^*$-algebras of $G$ do not coincide. Then the kernel
of the regular representation $\lambda : C^*(G) \to C^*_r(G)$ is a nonzero
purely non-dynamical ideal. Now suppose that the full and reduced
$C^*$-algebras of $G$ coincide but that $G$ is not effective. Then $J^{\ob}$
is nontrivial, and Lemma~\ref{lem:BCFS} implies that there is a purely
non-dynamical ideal $I$ of $C^*_r(G)$ whose support contains that of
$J^{\ob}$, and in particular is nonzero.
\end{proof}

\begin{remark}\label{rem:diagonal-uniqueness-theorem}
A mainstay of the theory of \'etale groupoid $C^*$-algebras is the \emph{diagonal
uniqueness theorem}, dating back to \cite{Renault}: for amenable effective
\'etale groupoids, any $^*$-homomorphism that is injective on the diagonal is
injective (See~\cref{prp:BCFS extension}). If $G$ is a groupoid that does not
satisfy the conclusion of this theorem, then there is a $^*$-homomorphism
$\phi$ of $C^*_r(G)$ whose kernel is purely non-dynamical. So if $G$ is also
inner-exact Hausdorff \'etale groupoid whose full and reduced $C^*$-algebras
coincide, then the kernel of $\phi$ is contained in the obstruction ideal.
This justifies the terminology \emph{obstruction ideal}: the obstruction
ideal measures how far away a groupoid is from satisfying a diagonal
uniqueness theorem.

For example, if $G$ is the groupoid of a higher-rank graph in the sense of
\cite{KumjianPask2000}, then the obstruction ideal is zero if and only if the
higher-rank graph is aperiodic (so its $C^*$-algebra satisfies the
Cuntz--Krieger uniqueness theorem) \cite[Proposition 3.6]{Robertson-Sims}.
\end{remark}

\section{Examples}\label{sec:examples}

\subsection{Groupoids from local homeomorphisms}\label{sec:DR-groupoids}
First we consider the groupoid constructed from a local homeomorphisms $T$ on
a locally compact Hausdorff space $X$. The associated semi-direct product
groupoid, usually called the Deaconu--Renault groupoid, is
\[
  G_T = \bigcup_{m,n\in \N} \{ (x, m-n, y )\in X\times \{m-n\}\times X: T^m x = T^n y\}
\]
where the product of $(x,p,y)$ and $(y',q,z)$ is defined precisely if $y=y'$ in which case $(x,p,y)(y,q,z) = (x,p+q,z)$
while inversion is $(x,p,y)^{-1} = (y, -p, x)$.
The unit space is naturally identified with $X$ and the range and source maps are then $r(x,p,y) = x$ and $s(x,p,y) =
y$.

We first verify that this groupoid is jointly effective where it is
effective.  For
open subsets $U$ and $V$ of $X$, the sets of the form
\[
  Z(U,m,n,V) = \{ (x,m-n,y)\in G_T : x\in U, y\in V\}
\]
comprise a basis for a locally compact Hausdorff \'etale topology on $G_T$.
The groupoid $G_T$ is amenable, and hence inner-exact \cite[Section~3]{Sims-Williams}.

For the rank-one Deaconu--Renault groupoids, we can describe explicitly the
points that are not effective . For $p\in \N_+$, let
\begin{equation}
  \mathcal{P}_p  = \{ x\in X : T^p\text{ pointwise fixes a neighbourhood of }x\}
\end{equation}
and let $\mathcal{P} = \bigcup_{p=1}^\infty \mathcal{P}_p$. Then
$\mathcal{P}$ is open and invariant in $X$ and the restricted system
$(\mathcal{P}, T)$ is reversible.

\begin{lemma}
  For a local homeomorphism $T$ on a locally compact Hausdorff space $X$, we have
  \begin{equation}\label{eq:noneff}
    X\setminus X_{\eff} = \{ x\in X : \orb_T(x) \cap \mathcal{P} \neq \varnothing \}.
  \end{equation}
\end{lemma}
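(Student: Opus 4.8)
The plan is to read off the right-hand side of~\eqref{eq:noneff} from the identity $X\setminus X_{\eff}=s\big(\I^\circ(G_T)\setminus G^{(0)}\big)$ supplied by~\cref{lem:effective-points}. An isotropy element of $G_T$ has the form $(x,p,x)$ with $p=a-b$ and $T^a x=T^b x$ for some $a,b\in\N$, and it is a unit exactly when $p=0$; so the task is to decide, for $p\neq 0$, when $(x,p,x)$ lies in the interior of $\I(G_T)$, and then to take sources. Throughout I use that $\I^\circ(G_T)$ is invariant under inversion (inversion is a homeomorphism of $G_T$ carrying $\I(G_T)$ onto itself), so that after replacing $(x,p,x)$ by $(x,-p,x)$ I may assume $p\geq 1$ whenever convenient, and that each power $T^j$ of a local homeomorphism is an open map.

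For the inclusion ``$\supseteq$'', suppose $y\in\mathcal P_p$, witnessed by an open set $W\ni y$ with $T^p|_W=\id$. First observe that $T^j|_W$ is injective for every $j\geq 0$: writing $j=qp+r$ with $0\leq r<p$ one has $T^j|_W=T^r|_W$ (because $T^p|_W=\id$), and $T^{p-r}\circ(T^r|_W)=T^p|_W=\id_W$, so $T^r|_W$ is injective. Using this I would verify that the basic set $Z(W,p,0,W)$ lies inside $\I(G_T)$: if $(z,p,w)\in Z(W,p,0,W)$ then $z,w\in W$ and $T^a z=T^b w$ for some $a-b=p$, and since $T^a z=T^{b+p}z=T^b(T^p z)=T^b z$ we get $T^b z=T^b w$, hence $z=w$. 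As $(y,p,y)\in Z(W,p,0,W)$ and $p\neq 0$, this shows $(y,p,y)\in\I^\circ(G_T)\setminus G^{(0)}$, so $y\in X\setminus X_{\eff}$. Finally, if $\orb_T(x)\cap\mathcal P\neq\varnothing$ then $x$ lies in the $G_T$-orbit of such a point $y$, and since $X\setminus X_{\eff}$ is invariant (\cref{lem:effective-points}) we conclude $x\in X\setminus X_{\eff}$.

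For the inclusion ``$\subseteq$'', take $x\in X\setminus X_{\eff}$, so by~\eqref{eq:iso eqn} there is $(x,p,x)\in\I^\circ(G_T)\setminus G^{(0)}$, and after possibly inverting it I may assume $p\geq 1$. Choose a basic open bisection $Z(U,m,n,V)\subseteq\I^\circ(G_T)$ containing $(x,p,x)$; then $m-n=p$, $x\in U\cap V$, $T^m x=T^n x$, and $Z(U,m,n,V)\subseteq\I(G_T)$ unravels to the statement that $z\in U$, $w\in V$, $T^m z=T^n w$ forces $z=w$. Put $A\coloneqq U\cap(T^m)^{-1}\big(T^n(V)\big)$, an open neighbourhood of $x$ (it is open because $T^n$ is an open map, and contains $x$ because $T^m x=T^n x\in T^n(V)$). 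For $z\in A$ there is $w\in V$ with $T^n w=T^m z$, whence $z=w$ by the above; thus $z\in V$ and $T^n z=T^m z=T^{n+p}z=T^p(T^n z)$, so $T^p$ fixes $T^n z$. Therefore $T^p$ is the identity on the open set $T^n(A)$, which contains $T^n x$, so $T^n x\in\mathcal P_p\subseteq\mathcal P$; and $T^n x\in\orb_T(x)$, giving $\orb_T(x)\cap\mathcal P\neq\varnothing$.

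The step I expect to be the main obstacle is precisely this last direction: converting the topological condition ``$(x,p,x)\in\I^\circ(G_T)$'' into the concrete dynamical conclusion that $T^p$ is the identity near a point of $\orb_T(x)$. The right device is to rewrite the inclusion $Z(U,m,n,V)\subseteq\I(G_T)$ as an implication about $T^m$ and $T^n$ on open subsets of $X$ and then transport it forward along $T^n$. The only other points needing a little care are the reduction to $p\geq 1$ via inversion-invariance of $\I^\circ(G_T)$, the standard conventions for the basic bisections $Z(U,m,n,V)$ (so that membership of $(x,p,x)$ genuinely encodes $T^m x=T^n x$ and $m-n=p$), and, in the ``$\supseteq$'' direction, the observation that $T^p|_W=\id$ forces every $T^j|_W$ to be injective, which is what pins $Z(W,p,0,W)$ inside the isotropy.
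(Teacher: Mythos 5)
Your proof is correct and follows essentially the same route as the paper: both inclusions are read off from the identity $X\setminus X_{\eff}=s\big(\I^\circ(G_T)\setminus G^{(0)}\big)$ of \cref{lem:effective-points}, using an explicit open bisection of $p$-periodic isotropy for ``$\supseteq$'' and an analysis of a basic open bisection contained in $\I(G_T)$ for ``$\subseteq$''. The only real difference is in execution: where the paper says ``we may assume $T^p x = x$'' and shrinks the bisection to one of the form $Z(U,p,0,U)$, you keep a general $Z(U,m,n,V)$ and transport the periodicity forward along $T^n$ to conclude $T^n x\in\mathcal{P}_p$ --- a slightly more explicit treatment of the same step.
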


\begin{proof}
  Let $V \coloneqq \{ x\in X : \orb_T(x) \cap \mathcal{P} \neq \varnothing \}$
  and let $G = G_T$ be the Deaconu--Renault groupoid of $T$.
  It is straightforward to verify that $V$ is open and invariant in $X$.
  We verify~\labelcref{eq:noneff} one inclusion at a time.

  Let $x\in V$ and choose $l\in \N$ such that $x' \coloneqq T^l(x)\in \mathcal{P}_p$ for some $p\in \N_+$.
  Pick an open set $U\subset X$ all of whose points are $p$-periodic
  and consider the open bisection given by
  \begin{equation}
    B = \{ (y, p, y)\in G : y\in U\}.
  \end{equation}
  Note that $B\subset \I^\circ(G) \setminus X$.
  In particular, $\I^\circ(G)_{x'}$ contains $(x',p,x')$, so $x'$ is not effective, and by invariance $x$ is not
  effective.

  For the other inclusion, suppose $x$ is not effective.
  Then $(x,p,x)\in \I^\circ(G)_x$ for some $p\in \N_+$. Fix an open bisection $B$ in $\I^\circ(G)\setminus X$
  containing $(x,p,x)$.
  We may assume that $T^p x = x$, so by shrinking $B$ we may assume that $B \subseteq Z(U, p,
  0, U)$ for some open subset $U$ of $X$.
  Then $T^p y = y$ for every $y\in s(B)$ since $B \subseteq \I(G)$.
  Therefore, $x\in V$.
\end{proof}

Next we show that any Deaconu--Renault groupoid $G_T$ is jointly effective where it is effective.
The result actually only depends on the nontrivial isotropy being infinite cyclic, so we record this more general
result here.

\begin{lemma}
  Any Hausdorff \'etale groupoid $G$ whose nontrivial isotropy is infinite cyclic is jointly effective where it is
  effective.
\end{lemma}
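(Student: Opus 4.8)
The plan is to reduce joint effectiveness at an effective unit to a single instance of effectiveness at that unit, by packaging the finitely many isotropy elements involved into one power of a single infinite-order generator. So fix $x\in G^{(0)}_{\eff}$; if $\I(G)_x=\{x\}$ there is nothing to prove, so assume $\I(G)_x$ is nontrivial, hence (by hypothesis) infinite cyclic, say $\I(G)_x=\langle c\rangle$. Because $x$ is effective we have $\I^\circ(G)_x=\{x\}$, and therefore $c^{m}\notin\I^\circ(G)$ for \emph{every} nonzero integer $m$, since $c$ has infinite order and so $c^{m}\neq x$.

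Now let $\gamma_1,\dots,\gamma_n\in\I(G)_x\setminus\{x\}$ and let $B_1,\dots,B_n\subseteq G\setminus G^{(0)}$ be open bisections with $\gamma_i\in B_i$. Write $\gamma_i=c^{k_i}$ with $k_i\neq0$ and set $m=\operatorname{lcm}(\lvert k_1\rvert,\dots,\lvert k_n\rvert)$. Pick an open bisection $C$ with $c\in C\subseteq G\setminus G^{(0)}$. Using the standard calculus of open bisections in an \'etale groupoid, each power $C^{k_i}$ and $C^{m}$ (with $C^{-1}=\{\gamma^{-1}:\gamma\in C\}$) is again an open bisection, containing $c^{k_i}$ respectively $c^{m}$, and we may take $C^{m}\subseteq G\setminus G^{(0)}$. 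Since $B_i$ and $C^{k_i}$ are open bisections both containing $c^{k_i}$, they agree on a neighbourhood of $x$ (the unique element with a given source near $x$ is the same in both), so their associated partial homeomorphisms of $G^{(0)}$ coincide there. Intersecting these $n$ neighbourhoods together with $s(C^{m})$, I obtain an open neighbourhood $N_0$ of $x$ with $N_0\subseteq s(C^{m})\cap\bigcap_{i}s(B_i)$ such that, writing $\theta$ for the partial homeomorphism attached to $C$, we have $\theta_{B_i}=\theta^{k_i}$ on $N_0$ for each $i$ and $\theta^{m}=\theta_{C^{m}}$ on $N_0$.

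The key point is the elementary inclusion $\operatorname{Fix}(\theta^{k_i})=\operatorname{Fix}(\theta^{\lvert k_i\rvert})\subseteq\operatorname{Fix}(\theta^{m})$, valid because $\theta^{\lvert k_i\rvert}(y)=y$ forces $\theta^{l\lvert k_i\rvert}(y)=y$ for all $l\geq1$ and $\lvert k_i\rvert$ divides $m$. Hence any $y\in N_0$ with $\theta^{m}(y)\neq y$ satisfies $r(B_i y)=\theta_{B_i}(y)=\theta^{k_i}(y)\neq y$ for all $i$ and lies in $\bigcap_i s(B_i)$, so it is exactly the unit required by the definition of joint effectiveness at $x$. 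It remains to produce such a $y$; but $c^{m}\in\I(G)_x\setminus\{x\}$ and $C^{m}\cap s^{-1}(N_0)$ is an open bisection in $G\setminus G^{(0)}$ containing $c^{m}$ with source $N_0$, so effectiveness at $x$ (in the equivalent form of \cref{def:effective-at-a-unit}) yields $y\in N_0$ with $r\big((C^{m}\cap s^{-1}(N_0))y\big)=\theta^{m}(y)\neq y$, as needed.

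The routine ingredients I would fill in are the bisection bookkeeping (that the finite family of power bisections $C^{k_i},C^{m}$ are open and contain the intended elements), the fact that two open bisections through a common point agree, along with their partial homeomorphisms, near any unit in their common source, and the observation $\operatorname{Fix}(\theta^{k})\subseteq\operatorname{Fix}(\theta^{m})$ for $k\mid m$. The one genuinely load-bearing step---the one that collapses without the infinite-cyclic hypothesis---is $c^{m}\notin\I^\circ(G)$: if $\I(G)_x$ were finite, $m$ could be a multiple of its order and then $c^{m}=x$ carries no information, which is precisely the phenomenon behind Exel's cross (\cref{eg:ExelsX}), where the origin is effective but not jointly effective.
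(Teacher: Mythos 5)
Your proof is correct and follows essentially the same strategy as the paper's: both arguments package the finitely many isotropy elements into a single common power (nontrivial precisely because the isotropy group is torsion-free), apply single-element effectiveness at $x$ to that power, and conclude that a nearby unit not fixed by the $m$-th power of the relevant partial homeomorphism cannot be fixed by any of the divisor powers. The only differences are presentational --- the paper intersects the powers $B_i^{p_i}$ of the given bisections instead of introducing an auxiliary bisection around the generator, and extracts a sequence $y_n\to x$ rather than a single $y$ in a prescribed neighbourhood --- and you correctly identify torsion-freeness of the isotropy as the load-bearing hypothesis that fails in Exel's cross.
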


\begin{proof}
  Let $x\in G^{(0)}$ be a point with nontrivial isotropy and suppose $B_1,\dots,B_N$ are open bisections in $G$ such
  that
  each $B_i$ contains an element $\gamma_i\in \Iso(G)_x\setminus G^{(0)}$.
  Since the isotropy group at $x$ is infinite cyclic there are minimal integers $p_1,\dots,p_N$ such that
  $\gamma_i^{p_i} = \gamma_j^{p_j}$ for all $i,j=1,\dots,N$.
  Put $\gamma \coloneqq \gamma_i^{p_i}$.
  Then $B \coloneqq B_1^{p_1}\cap \dots\cap B_N^{p_N}$ is an open bisection containing $\gamma$.

  Assume now that $x$ is effective.
  So whenever $U\subset G^{(0)}$ is an open neighbourhood of $x$,
  there is a point $y\in U$ such that $r(B y) \neq y$.
  Applying this to a neighbourhood basis of $x$, we can find a sequence $(y_n)_n$ in $G^{(0)}$ such that $y_n \to x$
  and $r(By_n) \neq y_n$ for all $n$.
  We show that $G$ is jointly effective at $x$. It suffices to show that for large $n$, we have $r(B_i y_n) \neq y_n$ for all $i =
  1,\dots,N$.

  Since $B_1^{p_1}$ contains $\gamma$, we have $x\in s(B_1^{p_1})$ so $y_n\in s(B_1^{p_1})$ for large $n$,
  and since $B \subset B_1^{p_1}$ we see that $r(B_1^{p_1} y_n) = r(B y_n)$.
  If $r(B_1 y_n) = y_n$, then
  \[
    y_n = r(B_1 y_n) = r(B_1^{p_1} y_n) = r(B y_n),
  \]
  which contradicts our choice of $y_n$.
  So for large $n$ we have $r(B_1 y_n) \not= y_n$ as required.
  Hence $G$ is jointly effective at $x$.
\end{proof}

As an immediate corollary we see that the groupoids built from a local homeomorphism $T$ on a locally compact Hausdorff
space $X$, called rank-one Deaconu--Renault groupoids,
are covered by the above result.

\begin{corollary}
  Any rank-one Deaconu--Renault groupoid is jointly effective where it is effective.
\end{corollary}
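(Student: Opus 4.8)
The plan is to deduce this directly from the preceding lemma, which says that any Hausdorff \'etale groupoid whose nontrivial isotropy groups are infinite cyclic is jointly effective where it is effective. So the only thing to verify is that every isotropy group of a rank-one Deaconu--Renault groupoid $G_T$ is either trivial or infinite cyclic.

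To see this, fix $x \in X$ and consider the isotropy group $\I(G_T)_x$, which consists of all triples of the form $(x, m-n, x)$ with $m, n \in \N$ and $T^m x = T^n x$. The map $(x, p, x) \mapsto p$ is a group homomorphism from $\I(G_T)_x$ into $\Z$, and it is injective because an element of $G_T$ is determined by its range, source, and middle coordinate. Hence $\I(G_T)_x$ is isomorphic to a subgroup of $\Z$, and every subgroup of $\Z$ is either trivial or infinite cyclic. Therefore every nontrivial isotropy group of $G_T$ is infinite cyclic.

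With this observation in hand, the preceding lemma applies to $G = G_T$ and yields immediately that $G_T$ is jointly effective where it is effective. There is no real obstacle here; the content is entirely in the preceding lemma, and this corollary is just the specialisation to the rank-one Deaconu--Renault case, recording the elementary fact that subgroups of $\Z$ are cyclic.
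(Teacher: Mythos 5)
Your proof is correct and follows the same route as the paper: the paper states this as an immediate consequence of the preceding lemma, implicitly using exactly your observation that $(x,p,x)\mapsto p$ embeds each isotropy group of $G_T$ into $\Z$, so every nontrivial isotropy group is infinite cyclic. Your write-up simply makes explicit the step the paper leaves to the reader.
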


\subsection{Partial actions}\label{sec:AraLolk}
Our notion of being jointly effective for groupoids is directly inspired by
Ara and Lolk's notion of relative strong topological freeness for partial
actions \cite[Section 7]{Ara-Lolk2018}. A partial action $\theta\colon
\Gamma \curvearrowright X$ of a countable discrete group $\Gamma$ on a locally compact
Hausdorff space $X$ is topologically free at $x\in X$ if whenever
$\theta_g(x) = x$ for some $1 \neq g\in \Gamma$, for any open neighbourhood
$U$ of $x$, there exists $y\in U$ such that $\theta_g(y) \neq y$. We say
$\theta$ is \emph{strongly topologically free} at $x$ if for any finite
collection $1\neq g_1,\cdots,g_k\in \Gamma$ such that $\theta_{g_i}(x) = x$
and any neighbourhood $U$ around $x$, there exists $y\in U$ such that
$\theta_{g_i}(y) \neq y$ for all $i=1,\cdots,k$. Finally, $\theta$ is
\emph{relatively strong topologically free} if it is strongly topologically
free at all points at which it is topologically free.

Following~\cite[Section 2]{Abadie}, a partial action $\theta\colon \Gamma\curvearrowright X$ has an associated groupoid
\[
   G_\theta = \{ (x, g, y)\in X\times \Gamma\times X \mid y\in \textrm{dom}(g),~\theta_g(y) = x\}
\]
whose unit space $G_\theta^{(0)}$ is naturally identified with $X$. Elements
$(x, g, y)$ and $(y', g', z)$ in $G_\theta$ are composable if and only if $y
= y'$ in which case $(x, g, y)(y', g', z) = (x, g g', z)$. Inversion is given
by ${(x, g, y)}^{-1} = (y, g^{-1}, x)$. The source and range maps $s, r\colon
G_\theta\to X$ are $s(x,g,y) = y$ and $r(x,g,y) = x$. The groupoid $G_\theta$
carries a locally compact and Hausdorff \'etale topology.

\begin{lemma}
  Let $\theta\colon \Gamma\curvearrowright X$ be a partial action of a countable discrete group $\Gamma$ on a locally
  compact Hausdorff space $X$.
  Then $\theta$ is topologically free at $x\in X$ if and only if $G_\theta$ is effective at $x$.
  Moreover, $\theta$ is strongly topologically free at $x$ if and
  only if $G_\theta$ is jointly effective at $x$.
  In particular, $\theta$ is relatively strongly topologically free if and only if $G_\theta$ is jointly effective
  where it is effective.
\end{lemma}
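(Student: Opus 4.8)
The plan is to prove the three equivalences by translating the defining conditions directly across the correspondence $x \leftrightarrow x$ between $X = G_\theta^{(0)}$ and the unit space of $G_\theta$. First I would record the shape of the isotropy of $G_\theta$: since $(x,g,y) \in \I(G_\theta)_x \setminus \{x\}$ exactly when $y = x$, $x \in \mathrm{dom}(g)$, $\theta_g(x) = x$, and $g \neq 1$, the nontrivial isotropy elements at $x$ are precisely the triples $(x,g,x)$ with $1 \neq g \in \Gamma$ and $\theta_g(x) = x$. I would also note that a basic open bisection through $(x,g,x)$ has the form $\{(\,\theta_g(z),g,z) : z \in W\}$ for an open neighbourhood $W$ of $x$ contained in $\mathrm{dom}(g)$, and that such basic bisections form a neighbourhood basis of $(x,g,x)$; for such a bisection, $r(B\,y) \neq y$ for $y \in W$ is literally the statement $\theta_g(y) \neq y$.

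Next I would prove the ``effective at $x$ $\iff$ topologically free at $x$'' equivalence using \cref{def:effective-at-a-unit}. For the forward direction, suppose $G_\theta$ is effective at $x$ and $\theta_g(x) = x$ with $g \neq 1$; given an open neighbourhood $U$ of $x$, shrink to $W \subseteq U \cap \mathrm{dom}(g)$ and form the basic bisection $B$ through $(x,g,x)$ with source $W$; effectiveness at $x$ yields $y \in W \subseteq U$ with $r(B\,y) \neq y$, i.e.\ $\theta_g(y) \neq y$. For the converse, any nontrivial isotropy element at $x$ has the form $(x,g,x)$ and any open bisection $B \ni (x,g,x)$ contains a basic one with some source neighbourhood $W \ni x$; topological freeness at $x$ applied with $U = W$ produces $y \in W = s(B)$ with $\theta_g(y) \neq y$, which is exactly $r(B\,y) \neq y$. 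The ``jointly effective at $x$ $\iff$ strongly topologically free at $x$'' equivalence is the same argument carried out with a finite family $(x,g_i,x)$, $1 \neq g_i \in \Gamma$, of isotropy elements and bisections $B_i$: one passes between the $B_i$ and basic bisections with common source neighbourhood $W = \bigcap_i W_i$, and the condition ``$r(B_i\,y) \neq y$ for all $i$'' matches ``$\theta_{g_i}(y) \neq y$ for all $i$'' verbatim.

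The final ``in particular'' then follows immediately: $G_\theta$ is jointly effective where it is effective iff it is jointly effective at every unit at which it is effective, which by the first two equivalences is iff $\theta$ is strongly topologically free at every point at which it is topologically free, i.e.\ iff $\theta$ is relatively strongly topologically free.

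I expect the only real subtlety — not a deep obstacle, but the point that needs care — to be the bookkeeping around open bisections: one must check that an arbitrary open bisection containing $(x,g,x)$ can be shrunk to a basic one of the form $\{(\theta_g(z),g,z) : z \in W\}$ with $W$ an open neighbourhood of $x$ inside $\mathrm{dom}(g)$, and, in the joint case, that finitely many such source neighbourhoods can be intersected to a common one still containing $x$. Both are routine consequences of the described basis for the \'etale topology on $G_\theta$ and the fact that $\Gamma$ is discrete, so the group coordinate is locally constant; once these are in place, the equivalences are essentially a change of notation.
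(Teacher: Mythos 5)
Your proposal is correct and takes essentially the same approach as the paper: both arguments rest on the observation that nontrivial isotropy at $x$ consists of triples $(x,g,x)$ with $1\neq g$ fixing $x$, that every open bisection through such a point can be shrunk to a basic one of the form $\{(\theta_g(z),g,z): z\in W\}$, and that $r(By)\neq y$ translates verbatim to $\theta_g(y)\neq y$. The only cosmetic difference is that the paper phrases both implications contrapositively (``not strongly topologically free $\Rightarrow$ not jointly effective'' and conversely), whereas you argue the positive statements directly.
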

\begin{proof}
Suppose that $\theta$ is not strongly topologically free at $x$. There exist
$g_1, \dots, g_k \in \Gamma \setminus \{e\}$ that all fix $x$, and a
neighbourhood $U$ of $x$ such that for every $y \in U$ there exists $i$ such
that $\theta_{g_i}(y) = y$. For each $i$, define
\[
    B_i \coloneqq \{(\theta_{g_i}(y), g_i, y) : y \in U\}.
\]
Then each $B_i$ is a bisection containing $(x, g_i, x)$, and there is no $y
\in \bigcap_i s(B_i) = U$ such that $r(B_i y) \not= y$ for all $i$. So
$G_\theta$ is not jointly effective at $x$. Taking $k = 1$ shows that if
$\theta$ is not topologically free at $x$ then $G_\theta$ is not effective at
$x$.

Now suppose that $G_\theta$ is not jointly effective at $x$. Fix elements
$\gamma_1, \dots, \gamma_k \in \I(G_\theta)_x \setminus \{x\}$ and open
bisections $B_i$ containing $\gamma_i$ such that for each $y \in \bigcap_i s(B_i)$
there exists $i$ such that $r(B_i y) = y$. By definition of $G_\theta$, each
$\gamma_i = (x, g_i, x)$ for some $g_i \in \Gamma \setminus \{e\}$. By
definition of the topology on $G_\theta$, for each $i$ there is an open
neighborhood $U_i$ of $x$ such that $\{(\theta_{g_i}(y), g_i, y) : y \in
U_i\} \subseteq B_i$. Now $U = \bigcap_i U_i$ is a neighbourhood of $x$ and
for each $y \in U$ there exists $i$ such that $r(B_i y) = y$. That is,
$\theta_{g_i}(y) = y$. So $\theta$ is not strongly topologically free at $x$.
Again, taking $k = 1$ throughout shows that if $G_\theta$ is not effective at
$x$ then $\theta$ is not topologically free at $x$.

The final statement follows by definition.
\end{proof}

\end{document}